\documentclass[11pt]{amsart}

\usepackage{amsmath}
\usepackage{amssymb}
\usepackage{amsthm}

\usepackage{times}
\usepackage[utf8]{inputenc}
\usepackage[T1]{fontenc}
\usepackage{tikz-cd}

\newtheorem{thm}{Theorem}[section]
\newtheorem{cor}[thm]{Corollary}
\newtheorem{lem}[thm]{Lemma}
\newtheorem*{thm*}{Theorem}

\newtheorem{prob}[thm]{Question}

\theoremstyle{definition}
\newtheorem{remark}[thm]{Remark}
\newtheorem{defin}[thm]{Definition}

\DeclareMathOperator{\pr}{\leq_{pr}}

\DeclareMathOperator{\B}{\mathcal B}
\DeclareMathOperator{\K}{\mathcal K}
\DeclareMathOperator{\F}{\mathbb{F}}
\DeclareMathOperator{\N}{\mathbb{N}}
\DeclareMathOperator{\Z}{\mathbb{Z}}
\DeclareMathOperator{\G}{\mathbb{G}}

\DeclareMathOperator{\E}{\mathcal E}

\newcommand{\rg}{\operatorname{rg}}
\newcommand{\dom}{\operatorname{dom}}

\newcommand{\dd}{\mathfrak{d}}
\newcommand{\aaa}{\mathfrak{a}}

\title{Pure-homogeneous Abelian groups}
\author{Ziemowit Kostana}
\address{Institute of Mathematics of the Czech Academy of Sciences, \v{Z}itn\'{a} 25, 115~67 Prague 1, Czech Republic}
\address{Wroc\l{}aw University of Science and Technology, Wybrze\.{z}e Wyspia\'nskiego 27, 50~370 Wroc\l{}aw, Poland}
\email{ziemowit.kostana@pwr.edu.pl}

\begin{document}

\begin{abstract}
We study Fra\"iss\'e classes of Abelian groups with pure embeddings. We characterize Abelian groups that are universal and homogeneous for: finitely co-generated groups, finite groups, groups of size less that $\kappa$, where $\kappa$ is strongly inaccessible.
\end{abstract}

\maketitle

{\bf Keywords:} Fraisse theory, finitely co-generated groups, homogeneous groups, pure subgroups 

{\bf MSC classification:} 20K25 20K27 20K30

\section{Introduction}
\subsection{Fra\"iss\'e-J\'onsson theory}

The classical Fra\"iss\'e theory studies relations between certain classes of finite structures, called \emph{Fra\"iss\'e classes}, and countable structures that are homogeneous with respect to finite substructures. However, since the original paper of Fra\"iss\'e \cite{fraisse}, the theory received a large number of far-reaching generalizations. The category-theoretic framework for Fra\"iss\'e theory was introduced by Droste and G\"obel \cite{dg1} \cite{dg2}, and was further generalized by Kubi\'s \cite{kub}. The theory originally developed in the language of models has became an influential technology in general topology \cite{solecki}, topological dynamics \cite{kpt}, theory of Polish groups \cite{truss} \cite{kpt}, or functional analysis \cite{garbulinska} \cite{espana}. The article \cite{macpherson} is an excellent survey of what is known about homogeneous structures and their automorphism groups, and \cite{kechris-rosendal} contains an extensive introduction to the study of topological properties of these groups.

The theory of uncountable Fra\"iss\'e limits is often called \emph{Fra\"iss\'e-J\'onsson theory}, as J\'onsson was the first to generalize the original work of Fra\"iss\'e to the uncountable case \cite{jonsson}. Perhaps the most remarkable instance of Fra\"iss\'e-J\'onsson theory is Parovi\v{c}enko Theorem, stating that under Continuum Hypothesis $\mathcal{P}(\omega)/\operatorname{Fin}$ is the unique Boolean algebra of size continuum that has Strong Countable Separation Property \cite{parovicenko}.

A streamlined exposition of the classical Fra\"iss\'e theory can be found in \cite{hodges}.

\subsection{Summary of results}

We study the Fra\"iss\'e classes of Abelian groups, with pure embeddings. Our main results are:
\begin{enumerate}
    \item[] 
    \begin{thm*}[A]
	The class of all finitely co-generated groups, together with pure embeddings, is a Fra\"iss\'e class. In particular, there exists a unique up to isomorphism group $\F_1$ that is pure-injective for finitely co-generated subgroups, and is an increasing union of countably many finitely co-generated pure subgroups. This group can be represented as
    $$\F_1 = \bigoplus_{j \in \mathbb N}\bigoplus_{p \in \mathbb P}\bigoplus_{n \in \mathbb N\cup\{\infty\}}\Z(p^n).$$
    \end{thm*}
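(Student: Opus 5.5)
The plan is to avoid the abstract Fra\"iss\'e existence theorem and work directly with the explicit group. The reason is that the category of finitely co-generated groups with pure embeddings has countably many isomorphism types, but its hom-sets are uncountable; for instance $\operatorname{Hom}(\Z(p^\infty),\Z(p^\infty))\cong\Z_p$. I would therefore verify the Fra\"iss\'e axioms and then show directly that
$$\F_1=\bigoplus_{j}\bigoplus_p\bigoplus_{n\le\infty}\Z(p^n)$$
is pure-injective for the class. Uniqueness then follows by a back-and-forth argument.

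First I recall that finitely co-generated Abelian groups are the Artinian ones, i.e.\ finite direct sums of groups $\Z(p^n)$ with $n\in\N\cup\{\infty\}$. This gives countably many isomorphism types.

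The axioms are checked as follows.
\begin{itemize}
\item Hereditary property: subgroups of Artinian groups are Artinian.
\item Joint embedding: $A,B\to A\oplus B$, and summand inclusions are pure.
\item Amalgamation: given pure embeddings $A\to B$ and $A\to C$, take the pushout $P=(B\oplus C)/\{(a,-a)\}$. Pushouts of pure monomorphisms are pure monomorphisms, and $P$ is a quotient of an Artinian group, hence Artinian.
\end{itemize}
The finite partial sums of $\F_1$ (finitely many $j,p,n$) are finitely co-generated direct summands, hence pure. Their increasing union is $\F_1$.

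The key lemma is that \emph{every pure subgroup $A$ of a finitely co-generated group $B$ is a direct summand}. To prove it, write $A=D\oplus R$ with $D$ divisible and $R$ finite. Then $D$ is a summand of $B$ because it is divisible, say $B=D\oplus B'$. The subgroup $R\cong A/D$ embeds as a pure subgroup of $B'$ via the projection along $D$. A bounded pure subgroup is a summand (Kulikov), so $B'=R'\oplus C$, and hence $B=A\oplus C$ for some finitely co-generated $C$. I expect this to be the main technical point. Some care is needed to check that the projection of $R$ stays pure and that the resulting decomposition really has $A$ itself, rather than an isomorphic copy, as a summand.

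Next comes the extension property. Let $A\le B$ be pure with both finitely co-generated, and let $f\colon A\to\F_1$ be a pure embedding. The image $f(A)$ is finitely co-generated, so it lies in some finite partial sum $F'$. It is pure there, so by the lemma $f(A)$ is a summand of $F'$, and $F'$ is a summand of $\F_1$. Write $B=A\oplus C$ by the lemma. Since $C$ is a finite sum of groups $\Z(p^n)$, we can embed $C$ as a summand of the block indexed by some $j$ not used in $F'$; call this embedding $g$. Then $f\oplus g\colon B\to\F_1$ is injective and its image is a direct summand of $\F_1$, hence pure.

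Uniqueness goes by back-and-forth. Suppose $G,H$ are both pure-injective for the class and both are countable increasing unions of finitely co-generated pure subgroups. We alternately extend partial pure isomorphisms between members of the two chains, using the extension property on each side. Since the chains are countable, the union of these partial maps is an isomorphism $G\cong H$.
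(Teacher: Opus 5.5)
The step that fails is ``the image $f(A)$ is finitely co-generated, so it lies in some finite partial sum $F'$.'' Finitely co-generated does not mean finitely generated, and a copy of $\Z(p^\infty)$ in $\F_1$ can be spread over infinitely many $j$-blocks. To see this, write $\Z(p^\infty)=\Z[1/p]/\Z$ and work in $\bigoplus_j\Z(p^\infty)\le\F_1$. Put $c_k=(p^{-k},p^{-k+1},\dots,p^{-1},0,0,\dots)$. Then $c_1\neq 0=pc_1$ and $pc_{k+1}=c_k$, so $D=\langle c_k : k\ge 1\rangle\cong\Z(p^\infty)$ is divisible, hence pure in $\F_1$. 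Yet $c_k$ has a non-zero entry in each of the first $k$ blocks, so $D$ lies in no finite partial sum. Take $A=\Z(p^\infty)$ and let $f$ map $A$ onto $D$: there is then no $F'$ and no unused block, and your construction of the extension breaks down. Your support argument is fine for the finite (reduced) part of $A$; in effect it proves Theorem B. The divisible part, however, needs the structure theory of divisible groups, and that is exactly what the paper's proof supplies. The paper splits $A=F\oplus D$ and uses that $\phi[D]\le\mathbb D$ is a summand with $\mathbb D/\phi[D]\cong\mathbb D$: removing finitely many copies of each $\Z(p^\infty)$ leaves every $p$-rank countably infinite. It deduces that the complement $\F_1'$ of $\phi[A]$ is again isomorphic to $\F_1$, so $C$ embeds onto a summand of $\F_1'$.

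Your route can be repaired along the same lines. Write $\mathbb D=\phi[D]\oplus\mathbb D'$ with $\mathbb D'\cong\mathbb D$, and fix a co-cyclic decomposition of $\mathbb D'\oplus\F_0\cong\F_1$. Then run your support argument only on the finite projection of $\phi[F]$ into $\mathbb D'\oplus\F_0$.

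Two further points need attention. First, a single $j$-block contains each $\Z(p^n)$ only once, so if $C$ has repeated co-cyclic summands you need several unused blocks. Second, and more importantly, the same phenomenon breaks the back-and-forth as you phrase it. After a forth step, the range of the partial isomorphism need not lie in any member of the other chain; the group $D$ above lies in no member of the chain of finite partial sums. You need an extra observation: in a union $H=\bigcup_m H_m$ of finitely co-generated pure subgroups, any finitely co-generated $S\le H$ and any $H_m$ lie in a common finitely co-generated pure subgroup. To prove it, write $S=S_d\oplus R$ with $S_d$ divisible and $R$ finite. Pick $k\ge m$ with $R\subseteq H_k=E_k\oplus R_k$, where $E_k$ is divisible and $R_k$ is finite. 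Then check with Lemma~\ref{prufer}(3) that $(S_d+E_k)+R_k$ is pure in $H$. The paper's appeal to Theorem~\ref{fraissethm}, whose uniqueness proof assumes each $\dom f_\alpha$ lies in some $A_\beta$, also leaves this point implicit.
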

    
    \item[] 
    \begin{thm*}[B]
	The class of all finite groups, together with pure embeddings is a Fra\"iss\'e class. In particular, there exists a unique up to isomorphism group $\F_0$ that is pure-injective for finite subgroups, and is an increasing union of countably many finite pure subgroups. This group can be represented as
    $$\F_0 := \bigoplus_{j \in \mathbb N}\bigoplus_{p \in \mathbb P}\bigoplus_{n \in \mathbb N}\Z(p^n).$$
    \end{thm*}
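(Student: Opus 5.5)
We verify the Fraïssé axioms for the class $\mathcal K_0$ of finite abelian groups with pure embeddings: countably many isomorphism types, hereditary property (for pure substructures), joint embedding and amalgamation. We then identify the limit with $\F_0$ by checking that the displayed group has the defining properties, and conclude by uniqueness of Fraïssé limits.

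\emph{Countability and joint embedding.} There are only countably many finite groups up to isomorphism. For joint embedding, given finite $A,B$, the direct sum $A\oplus B$ contains both as direct summands, hence as pure subgroups. The hereditary property is automatic, since we only work with the class itself and pure embeddings compose.

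\emph{Amalgamation.} This is the main step. Let $f\colon C\to A$ and $g\colon C\to B$ be pure embeddings of finite groups. A finite group is bounded, and a pure bounded subgroup is a direct summand (Kaplansky's theorem, or directly: pure-injectivity of bounded groups). Hence $A = f[C]\oplus A'$ and $B=g[C]\oplus B'$. Then $D=C\oplus A'\oplus B'$, with the obvious maps $A\to D$ and $B\to D$, is finite, both maps are pure embeddings (onto direct summands), and the square commutes on $C$. Equivalently, the pushout $(A\oplus B)/\{(f(c),-g(c))\}$ works. The only real input is the summand fact, so I expect this step to be the one place needing care, and I would cite the standard result that pure bounded subgroups are summands. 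By Fraïssé's theorem we obtain a unique countable limit that is a union of a chain of finite pure subgroups and is pure-injective (extension property) for finite groups.

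\emph{Identification.} Let $G=\F_0$ as displayed. It is the union of the chain $G_k=\bigoplus_{j\le k}\bigoplus_{p\le k}\bigoplus_{n\le k}\Z(p^n)$. Each $G_k$ is finite and a direct summand, hence pure, and every finite group embeds purely into some $G_k$ as a summand. For the extension property, let $A\subseteq B$ be finite with $A$ pure in $B$, and let $e\colon A\to G$ be a pure embedding. Then $e[A]\subseteq G_k$ for some $k$, and $e[A]$ is pure in $G_k$, hence a summand: $G_k=e[A]\oplus E$. Write $B=A\oplus B'$. Since $B'$ is finite, there is an $m>k$ such that $B'$ embeds onto a summand of the complement of $G_k$ inside $G_m$; this uses the fact that each cyclic $\Z(p^n)$ occurs with infinite multiplicity. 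Extending $e$ by this embedding on $B'$ gives a map onto a summand of $G$, hence a pure embedding $B\to G$ extending $e$. This identifies $G$ with the Fraïssé limit.
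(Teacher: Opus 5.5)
Your proof is correct, but it takes a different route from the paper's in two places.

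First, the paper never checks JEP and AP separately. Via the proof of Theorem~\ref{mainthm1}, which is said to carry over verbatim, it invokes Theorem~\ref{fraissethm} and reduces everything to showing that the explicit group $\F_0$ is pure-injective. You instead verify the axioms directly, splitting off the common subgroup and amalgamating into $C\oplus A'\oplus B'$, which makes the Fra\"iss\'e-class claim explicit rather than implicit.

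Second, and more substantively, the two arguments for the extension property differ. The paper writes $\F_0=\phi[A]\oplus\F_0'$ and argues that $\F_0'\cong\F_0/\phi[A]\cong\F_0$, so that the complement of $A$ in $B$ can be placed in $\F_0'$. That isomorphism is justified only by ``$A$ is finite''. Making it rigorous requires knowing that a summand of a direct sum of cyclic groups is again such a sum, with the same (countably infinite) multiplicity of each $\Z(p^n)$. You sidestep this entirely. Since $e[A]$ lies in a finite stage $G_k$, you put $B'$ into the canonical complement of $G_k$ inside some $G_m$, and you never need to know what the complement of $e[A]$ looks like.

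Your argument is therefore more elementary and self-contained. The paper's route yields the stronger structural fact that every finite pure subgroup of $\F_0$ has a complement isomorphic to $\F_0$. It is also phrased to cover the finitely co-generated case at the same time. There your trick would not transfer verbatim, because a pure copy of $\Z(p^\infty)$ in $\F_1$ can have infinite support and so need not lie in any finite-support stage.

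One small remark: your stated reason for the hereditary property is garbled. It holds simply because subgroups of finite groups are finite.
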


    \item[]
    \begin{thm*}[C]
	Let $\kappa$ be a strongly inaccessible cardinal, and assume that the group $\F$ has the following property:  whenever $C\leq \F$ has size less than $\kappa$, then $C$ is contained in an algebraically compact direct summand $D \pr \F$ such $\F / D$ contains a pure copy of $\G_\lambda$, for every $\lambda{<}\kappa$. Then $\F$ is pure-injective for the class of all groups of size ${<}\kappa$.
    \end{thm*}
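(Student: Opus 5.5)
The plan is to verify the extension property directly. Suppose $A \pr B$ with $|B|<\kappa$, and let $f\colon A\to\F$ be a pure embedding. I want a pure embedding $g\colon B\to\F$ extending $f$. I will construct $g$ as a sum of two maps: one into an algebraically compact summand that extends $f$, and one into a complement that records $B/A$ and restores injectivity and purity.

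First, apply the hypothesis to $C=f[A]$, which has size ${<}\kappa$. This gives an algebraically compact direct summand $D\pr\F$ with $f[A]\subseteq D$. Write $\F=D\oplus E$, so that $E\cong \F/D$. Algebraically compact groups are exactly the pure-injective groups, so the homomorphism $f\colon A\to D$ extends along the pure inclusion $A\pr B$ to a homomorphism $h\colon B\to D$. In general $h$ is neither injective nor pure, and this is the defect to repair.

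Next, set $\lambda=|B|<\kappa$. I will use the property of $\G_\lambda$ introduced in the paper, which I expect is pure-universality for groups of size ${\le}\lambda$. With it, $B/A$ embeds purely into $\G_\lambda$, and $\G_\lambda$ embeds purely into $E\cong\F/D$. Since purity is transitive, this yields a pure embedding $j\colon B/A\to E$. Let $k=j\circ\pi$, where $\pi\colon B\to B/A$ is the quotient map, and define $g=h+k\colon B\to D\oplus E=\F$. On $A$ we have $k=0$, so $g$ extends $f$.

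Finally, I check that $g$ is a pure embedding.
\begin{itemize}
\item \emph{Injectivity.} If $g(b)=0$, then the $E$-component gives $k(b)=0$, hence $b\in A$. Then $0=h(b)=f(b)$, so $b=0$.
\item \emph{Purity, first reduction.} Suppose $g(b)=ny$ with $y=d+e\in D\oplus E$. Comparing $E$-components gives $j(b+A)=ne$. Since $j$ is a pure embedding, $b+A=n(c+A)$ for some $c\in B$. Put $a=b-nc\in A$.
\item \emph{Purity, conclusion.} Now $f(a)=g(a)=n(y-g(c))$. Since $f$ is a pure embedding into $\F$, there is $a'\in A$ with $a=na'$. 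Hence $b=n(c+a')\in nB$.
\end{itemize}

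The main obstacle is the second step: producing a pure embedding of $B/A$ into the complement $E$. Everything depends on the exact defining property of $\G_\lambda$. If $\G_\lambda$ is only universal for a restricted class, for example reduced or torsion groups, I would first split $B/A$ into its divisible and reduced parts. I would then argue that the divisible part can be absorbed, using $D$ or the divisibility of suitable summands of $E$. I would also need to handle the case $B/A=0$ and sizes below $\lambda$ uniformly, which is why the hypothesis is stated for every $\lambda<\kappa$.
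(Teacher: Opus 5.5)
Your proof is correct, and it takes a different route from the paper.

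\textbf{The paper's route.} It first replaces $B$ by $\G_\lambda$ via Theorem~\ref{universal}, then uses Maranda's hull (Theorem~\ref{pureinjectivehull}). The hull $\aaa(A)$ sits purely in $\G_\lambda$ over $A$ and, being algebraically compact, splits off as $\G_\lambda=\aaa(A)\oplus C$. Meanwhile $\phi$ extends to a pure embedding $\psi\colon\aaa(A)\to\mathbb D$. The extension is $\psi+\eta$, with $\eta$ a pure embedding of $C$ into $\mathbb E$. Purity is automatic because this is a sum of pure embeddings on complementary summands.

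\textbf{Your route.} You take a possibly non-injective homomorphism $h\colon B\to D$ extending $f$ and repair it with the correction term $j\circ\pi$, checking injectivity and purity by hand. Your chase is right: the $E$-component puts $b$ in $nB+A$, and purity of $f[A]$ in $\F$ handles the leftover element of $A$.

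\textbf{Comparison.} Your route avoids the hull and the enlargement of $B$. It also isolates a clean lemma: a pure embedding of $A$ into an algebraically compact $D$ extends to a pure embedding of $B$ into $D\oplus E$ whenever $B/A$ embeds purely into $E$. The cost is one fact the paper never states: algebraically compact groups (defined in the paper as direct summands of every group containing them purely) are pure-injective in Fuchs' homomorphism-extension sense. This is standard and follows from the paper's definition by a pushout. The image of $D$ under $d\mapsto(d,0)+N$ is pure in $(D\oplus B)/N$, where $N=\{(f(a),-a)\colon a\in A\}$, hence is a summand. Composing the retraction onto $D$ with $b\mapsto(0,b)+N$ gives your $h$. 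In exchange, the paper's route yields more structure (the extension carries $\aaa(A)$ into $\mathbb D$) and needs no element-level computation.

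\textbf{Your closing worries.} They are unfounded. Theorem~\ref{universal} has no reducedness or torsion restriction. With $\lambda=\max(|B|,\aleph_0)$, its proof works verbatim for any group of size at most $\lambda$, since the index set of pairs $(a,k)$ then has size at most $\lambda$. So $B/A$ embeds purely into $\G_\lambda$ even when it is small or trivial.
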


    \item[]
    \begin{thm*}[D]
    Let $\kappa$ be a strongly inaccessible cardinal. The class of all groups if size ${<}\kappa$, together with pure embeddings, is a Fra\"iss\'e class. In particular, there exists a unique up to isomorphism group of size $\kappa$ -- denoted $\F_\kappa$ -- that is pure-injective for subgroups of size ${<}\kappa$. 
    
    The group $\F_\kappa$ is isomorphic to the ${<}\kappa$-supported product of length $\kappa$ of all co-cyclic groups, where the isomorphism type of every co-cyclic group appears $\kappa$ many times. 
    \end{thm*}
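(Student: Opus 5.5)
The statement splits into two parts: the Fra\"iss\'e-J\'onsson part, which gives existence and uniqueness of $\F_\kappa$, and the concrete representation, which I would obtain from Theorem C.

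\emph{Step 1 (Fra\"iss\'e class).} Let $\mathcal K_\kappa$ be the class of groups of size ${<}\kappa$ with pure embeddings. I would verify the hypotheses of J\'onsson's theorem in the category-theoretic form of Droste--G\"obel/Kubi\'s.

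\begin{itemize}
\item \emph{Joint embedding.} Given $B, C \in \mathcal K_\kappa$, both embed purely into $B\oplus C$.
\item \emph{Amalgamation.} Given pure embeddings $f\colon A\to B$ and $g\colon A\to C$, take the pushout $E=(B\oplus C)/\{(f(a),-g(a)) : a\in A\}$. I would check directly that the induced maps $B\to E$ and $C\to E$ are injective and pure. Suppose $n\,[c]=[b+c']$ in $E$, with $b\in B$ and $c,c'\in C$. This unwinds to $nc-c'=g(a)$ and $b=-f(a)$ for some $a \in A$, up to sign conventions. Purity of $f$ and $g$ then lets us divide the $A$-component by $n$ and produce an $n$-th root inside the image of $C$. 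This is the usual fact that pushouts of pure monomorphisms are pure monomorphisms. The size of $E$ stays below $\kappa$.
\item \emph{Chains.} Unions of pure chains of length ${<}\kappa$ stay in the class, because $\kappa$ is regular.
\item \emph{Counting.} Since $\kappa$ is inaccessible, there are at most $\sum_{\lambda<\kappa}2^\lambda=\kappa$ isomorphism types in $\mathcal K_\kappa$. Also, for $A,B\in\mathcal K_\kappa$ there are fewer than $\kappa$ homomorphisms $A\to B$.
\end{itemize}

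J\'onsson's theorem then yields a unique group of size $\kappa$ that is a union of a continuous pure chain of length $\kappa$ of members of $\mathcal K_\kappa$ and has the extension property. The extension property is exactly pure-injectivity for subgroups of size ${<}\kappa$.

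\emph{Step 2 (the model).} Let $P$ be the ${<}\kappa$-supported product $\prod_{\alpha<\kappa}Z_\alpha$, where the $Z_\alpha$ enumerate the co-cyclic groups $\Z(p^n)$ and $\Z(p^\infty)$, each type appearing $\kappa$ times. By regularity of $\kappa$, every element of $P$ has support bounded below $\kappa$. Hence $P=\bigcup_{\beta<\kappa}P_\beta$, where $P_\beta=\prod_{\alpha<\beta}Z_\alpha$ is the full product. Each $|P_\beta|<\kappa$ by strong inaccessibility, so $|P|=\kappa$. Each $P_\beta$ is a direct summand of $P$, hence pure, and the chain is continuous at limits. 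So $P$ is a union of a continuous pure chain of members of $\mathcal K_\kappa$. It remains only to show that $P$ is pure-injective for subgroups of size ${<}\kappa$; uniqueness from Step 1 then identifies $P$ with $\F_\kappa$.

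\emph{Step 3 (apply Theorem C).} Take $C\le P$ with $|C|<\kappa$. The supports of elements of $C$ are all bounded by some $\beta<\kappa$, so $C\subseteq D:=P_\beta$. The group $D$ is a direct product of co-cyclic, hence algebraically compact, groups, so $D$ is algebraically compact. It is also a direct summand of $P$, with complement $P/D\cong\prod^{<\kappa}_{\beta\le\alpha<\kappa}Z_\alpha$. Since every co-cyclic type still occurs $\kappa$ many times above $\beta$, this complement is isomorphic to $P$ itself. So it suffices to show that $P$ contains a pure copy of $\G_\lambda$ for every $\lambda<\kappa$.

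This is the step I expect to be the main obstacle, because it depends on the precise definition of $\G_\lambda$ given earlier in the paper. My plan is to use the earlier description of $\G_\lambda$ as built from co-cyclic groups of size ${\le}2^\lambda$. I would realize it as a pure subgroup of a full product $\prod_{\alpha\in I}Z_\alpha$ over some $I\subseteq\kappa$ with $|I|<\kappa$, which is possible because each type repeats $\kappa$ times. Such a product is a direct summand of $P$. Once this holds, Theorem C applies and $P$ is pure-injective for groups of size ${<}\kappa$, which completes the proof.
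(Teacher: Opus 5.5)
Your route is essentially the paper's. The paper also proves Theorem D by checking that the ${<}\kappa$-supported product satisfies the hypothesis of Theorem \ref{mainthm3} and then appealing to the uniqueness part of Theorem \ref{fraissethm}. The real difference is your Step 1. The paper never checks JEP and AP for $\mathcal K_\kappa$; they can be recovered afterwards from the existence of a pure-injective (hence pure-universal) group together with Lemma \ref{extending}. Your pushout argument proves them directly, which makes the claim that $\mathcal K_\kappa$ is a Fra\"iss\'e class self-contained. In that sketch the equation should read $[c]=n[b+c']$, not $n[c]=[b+c']$. Then $nb=f(a)$, purity and injectivity of $f$ give $a\in nA$, and hence $c\in nC$.

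There is one false claim in Step 2: the chain of full products $P_\beta=\prod_{\alpha<\beta}Z_\alpha$ is not continuous. For $\beta=\omega$, say, $\bigcup_n P_n$ consists only of the finitely supported elements, a proper subgroup of $P_\omega$. The repair is easy. At limit $\beta$, take instead the elements whose support is bounded below $\beta$; this is an increasing union of direct summands of $P$, hence pure, and it has size ${<}\kappa$. Alternatively, do what the paper does: by Lemma \ref{extending} and regularity of $\kappa$, every group of size $\kappa$ is the union of a continuous chain of pure subgroups of size ${<}\kappa$. You need this observation anyway. The statement asserts uniqueness among all pure-injective groups of size $\kappa$, whereas your Step 1 only gives uniqueness among groups already presented as such unions.

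Finally, the step you flagged as the main obstacle is immediate. By definition, $\G_\lambda$ is the full product in which each co-cyclic type occurs $\lambda$ times. If $I\subseteq\kappa$ contains exactly $\lambda$ indices of each type, then $\prod_{\alpha\in I}Z_\alpha\cong\G_\lambda$. Since $|I|<\kappa$, this product is a direct summand of $P$.
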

    
\end{enumerate}

\section{Preliminaries}

All groups are assumed to be Abelian. Every Abelian group $A$ is a module over the ring of integers, so we use the expressions $k \cdot a$, of $k\cdot A$, for $a \in A, \; k\in \Z$ in the usual sense. The notation is mostly standard. The symbol $\mathbb P$ denotes the set of all primes. 

We collect here known facts from group theory and Fra\"iss\'e-J\'onsson thoery. All results from this section -- except Theorem \ref{ctblycogenerated} -- are proven in \cite{fuchs}.

\subsection{Pure subgroups}

\begin{defin}
    A subgroup $A\leq B$ is a \emph{pure subgroup} if for every $a \in A$, every integer $k\geq 1$, if there exists $b \in B$ such that $a=k\cdot b$, then there also exists $a' \in A$ such that $a=k\cdot a'$. We then write $A \pr B$. 
    
    A homomorphism $\phi:A\longrightarrow B$ is a \emph{pure embedding} if it is a monomorphism onto a pure subgroup.
\end{defin}

This notion lies strictly between a subgroup and a direct summand. In the realm of finite (even finitely co-generated) groups -- these notions coincide:

\begin{lem}[Corollary 27.6, \cite{fuchs}]\label{finitedirectsummand}
    A finitely co-generated pure subgroup is always a direct summand.
\end{lem}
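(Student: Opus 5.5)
We want to show: if $A \pr B$ and $A$ is finitely co-generated, then $A$ is a direct summand of $B$. The plan is to reduce this to the fact that a finitely co-generated group is algebraically compact, i.e.\ pure-injective, and then use pure-injectivity to split the inclusion.

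\emph{Step 1: describe $A$.} Recall that $A$ is finitely co-generated exactly when its socle $A[\mathbb P] = \bigoplus_p A[p]$ is finite and essential in $A$. Then $A$ embeds as an essential subgroup of the injective hull $E(A[\mathbb P])$. That hull is a finite direct sum of Pr\"ufer groups $\Z(p^\infty)$, so $A$ is torsion. For each prime $p$, the primary component $A_p$ sits in $\Z(p^\infty)^{n_p}$ with finite socle, and all but finitely many $n_p$ vanish. A $p$-group with finite socle is a direct sum of a divisible part $\Z(p^\infty)^{k}$ and a reduced part. The reduced part has finite socle and no elements of infinite height, so it is bounded and hence finite. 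Thus
$$A \cong F \oplus \bigoplus_{i=1}^m \Z(p_i^\infty)$$
with $F$ finite.

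\emph{Step 2: split off the summands of $A$ separately.} The divisible part $D=\bigoplus \Z(p_i^\infty)$ is injective, so it is a direct summand of $B$, say $B = D \oplus B'$. Then $A = D \oplus (A\cap B')$, and $A\cap B' \cong F$. Purity passes to this summand: $A\cap B' \pr B'$, because $A \pr B$ and projecting onto $B'$ along $D$ preserves the equations $a = kb$. It therefore remains to show that a finite pure subgroup $F \pr B'$ is a direct summand.

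\emph{Step 3: the finite case (the main obstacle).} Write $F$ as a direct sum of cyclic groups $\langle a_j\rangle$, with $a_j$ of order $p^{e_j}$. Since $F$ is finite we may work prime by prime and assume $B'$ is replaced by its relevant part. The classical tool is that a bounded pure subgroup is a direct summand (Kulikov). I would prove this directly. Choose $n$ with $nF=0$. Purity of $F$ gives $F\cap nB' = nF = 0$.

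Extend $F \oplus nB'$ to $B'$ by a maximality argument. Take $C \supseteq nB'$ maximal with $C\cap F=0$; the aim is to show $F + C = B'$. Suppose $b \notin F+C$. Pick such a $b$ whose coset in $B'/(F+C)$ has prime order $p$; this is possible since $B'/(F+C)$ has exponent dividing $n$, because $nB'\subseteq C$. Then $pb = f + c$ with $f\in F$ and $c \in C$. The key point is to use purity to show $f \in pF$. Once we have $f = pf'$, we replace $b$ by $b - f'$, and $C+\langle b-f'\rangle$ still meets $F$ trivially, which contradicts the maximality of $C$.

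The delicate part is showing $f\in pF$ from $f + c \in pB'$ with $c\in C$. This is where purity of $F$ in $B'$, rather than in $F\oplus C$, is needed. Here I would instead cite the standard result that pure bounded subgroups are summands, or equivalently that $F$ is pure-injective, since finite groups are algebraically compact. With that, the identity map on $F$ extends along the pure inclusion $F\pr B'$ to a retraction $B' \to F$, which finishes the proof.
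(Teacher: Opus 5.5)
Your final argument is correct, and it is essentially the standard proof behind Fuchs' Corollary 27.6, which the paper only cites. You write $A=D\oplus F$ with $D$ divisible and $F$ finite, split off $D$ by injectivity, and apply Kulikov's theorem that bounded pure subgroups are summands. Step 1 can simply quote Theorem~\ref{co-generated}, which also removes your unjustified claim that the reduced part has no elements of infinite height. With the paper's definition of algebraic compactness even Step 2 is unnecessary: a finite direct sum of co-cyclic groups is a direct product of co-cyclic groups, hence algebraically compact by Fuchs' Theorem 38.1, hence a summand of every group containing it purely.

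The maximality argument you began in Step 3, however, is not merely delicate: its goal is false, so the citation is doing real work. Here is a counterexample.
\begin{itemize}
\item Let $B'=\langle x\rangle\oplus\langle y\rangle\oplus\langle z\rangle$ with $x,z$ of order $p^2$ and $y$ of order $p$, and let $F=\langle px+y\rangle\oplus\langle z\rangle$.
\item Then $F\cap pB'=\langle pz\rangle=pF$ and $p^2B'=0$, so $F\pr B'$. For $n=p^2$, the exponent of $F$, we have $nB'=0$.
\item The subgroup $C=\langle y\rangle$ meets $F$ trivially. It is maximal with this property, because $(F+C)/C$ contains the socle of $B'/C\cong\Z(p^2)\oplus\Z(p^2)$ and is therefore essential in it.
\item Yet $F+C=\langle px,y,z\rangle\neq B'$. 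For $b=x$ you get $pb=(px+y)+(-y)$ with $f=px+y\notin pF$, so your key claim $f\in pF$ fails.
\end{itemize}

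A correct self-contained treatment of the finite case works in $B'/nB'$ instead.
\begin{itemize}
\item $(F+nB')/nB'$ is pure in $B'/nB'$. If $f-kg\in nB'$ and $d=\gcd(k,n)=uk+vn$, then $f\in dB'$, so purity gives $f=df'$ with $f'\in F$. Since $nF=0$, this yields $f=k(uf')$.
\item The bounded quotient $B'/(F+nB')$ is a direct sum of cyclic groups.
\item Purity of $(F+nB')/nB'$ lets you lift each cyclic generator to an element of the same order.
\item The preimage in $B'$ of the subgroup generated by these lifts is a complement of $F$.
\end{itemize}
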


\begin{lem}[Corollary 28.3, \cite{fuchs}]\label{finitedirectsummand2}
    If $A$ is a pure subgroup of $B$ and $B/A$ is finitely generated, then $A$ is a direct summand of $B$.
\end{lem}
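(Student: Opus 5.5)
The plan is to reduce the statement to the splitting of a pure exact sequence whose cokernel is finitely generated. Let $A \pr B$ with $B/A$ finitely generated, and write $\pi\colon B \to B/A$ for the quotient map. Since $B/A$ is finitely generated, the structure theorem gives
$$B/A = \langle c_1\rangle \oplus \dots \oplus \langle c_m\rangle,$$
where each $\langle c_i\rangle$ is either infinite cyclic or cyclic of prime-power order $p_i^{n_i}$. It will suffice to construct a homomorphism $\sigma\colon B/A \to B$ with $\pi\sigma = \mathrm{id}$. Then $B = A \oplus \sigma(B/A)$, because every $b\in B$ decomposes as $(b - \sigma\pi b) + \sigma\pi b$, and $A \cap \sigma(B/A) = 0$ since $\pi$ is injective on $\sigma(B/A)$. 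Since $B/A$ is a direct sum of cyclic groups, defining $\sigma$ amounts to choosing, for each $i$, a preimage $b_i\in B$ of $c_i$ whose order equals the order of $c_i$.

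For an infinite cyclic summand, any preimage $b_i$ of $c_i$ works, because $\langle c_i\rangle$ is free and there is no relation to respect. For a summand of order $k = p_i^{n_i}$, first pick an arbitrary preimage $b$ of $c_i$. Then $kc_i = 0$ gives $kb \in A$. The element $kb$ is divisible by $k$ in $B$, so purity of $A$ in $B$ supplies $a'\in A$ with $kb = ka'$. Set $b_i := b - a'$. Then $\pi(b_i) = c_i$ and $kb_i = 0$. The order of $b_i$ is therefore exactly $k$, since its image $c_i$ already has order $k$. Setting $\sigma(c_i) = b_i$ yields a well-defined homomorphism on each cyclic summand, and hence on their direct sum.

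The only substantive step is this use of purity to correct a lift so that it has finite order. That step is exactly where the hypothesis $A \pr B$ enters, and I do not expect any real obstacle beyond it. The remaining ingredients are the decomposition of finitely generated abelian groups and the standard fact that a section of the quotient map produces a direct complement.
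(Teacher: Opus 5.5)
Your proof is correct. The paper gives no proof of its own and simply cites Fuchs, and the standard argument there is essentially yours: purity lets each cyclic generator of $B/A$ be lifted to an element of the same order, and these lifts span a complement to $A$. The same argument works verbatim whenever $B/A$ is any direct sum of cyclic groups, so the prime-power refinement you use is not needed.
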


The following is probably the simplest scenario in which the notions differ.
\begin{lem}
    The infinite direct sum $\bigoplus \limits_{i<\omega}\Z$ is a pure subgroup of $\prod\limits_{i<\omega}\Z$ that is not a direct summand.
\end{lem}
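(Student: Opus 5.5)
Write $S=\bigoplus_{i<\omega}\Z$ and $P=\prod_{i<\omega}\Z$. Purity and non-splitting are handled separately: purity by a coordinatewise divisibility check, and non-splitting by a Baer--Specker type argument. The latter is the real content.

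\emph{Purity.} Take $a\in S$ and $k\geq 1$, and suppose $a=k\cdot b$ for some $b\in P$. Then $a_i=k b_i$ for every $i$. Whenever $a_i=0$ we get $b_i=0$, because $\Z$ is torsion-free. Hence $b$ has finite support, so $b\in S$, and we may take $a'=b$. So $S\pr P$.

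\emph{Non-splitting.} Suppose for contradiction that $P=S\oplus C$. Then $P/S\cong C$ embeds in $P$, so it suffices to show that $P/S$ contains a nonzero element that is divisible by infinitely many integers, while no nonzero element of $P$ has this property. The second half is easy: if $0\neq x\in P$, pick $i$ with $x_i\neq 0$. If $k\mid x$ in $P$, then $k\mid x_i$, so $k\leq |x_i|$.

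\emph{A suitable element of $P/S$.} Fix a prime $p$ and set
$$x=(1,\,p,\,p^2,\,p^3,\dots)\in P .$$
For each $n$, the element $x$ differs from $p^n\cdot(0,\dots,0,1,p,p^2,\dots)$ only in its first $n$ coordinates. That difference lies in $S$, so $p^n$ divides $x+S$ in $P/S$. Also $x\notin S$, so $x+S\neq 0$. Under the isomorphism $P/S\cong C\leq P$, the image of $x+S$ is therefore a nonzero element of $P$ divisible by $p^n$ for every $n$. This contradicts the previous paragraph.

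The only delicate point is this last step: choosing an element of the quotient that is infinitely divisible, and checking that the difference really has finite support. The rest is routine. Alternatively one could cite the Baer--Specker theorem together with the slenderness of $\Z$, but the direct argument above avoids needing those results.
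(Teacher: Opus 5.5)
Your proof is correct and follows essentially the same route as the paper. The complement $C\cong P/S$ would sit inside $\prod_{i<\omega}\Z$, where every nonzero element has only finitely many divisors, yet $P/S$ contains a nonzero element with infinitely many divisors. The paper uses $(1!,2!,\ldots)+S$, which is divisible by every natural number, where you use $(1,p,p^2,\ldots)+S$, which is divisible by every $p^n$; you also write out the purity check that the paper calls straightforward.
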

\begin{proof}
    Purity is straightforward, so assume towards a contradiction that $\bigoplus\limits_{i<\omega}\Z$ has a complementary direct summand $C \leq \prod\limits_{i<\omega}\Z$, so we have a decomposition
    $$\prod\limits_{i<\omega}\Z = \big(\bigoplus\limits_{i<\omega}\Z \big) \oplus C.$$
    In particular, we have
    $$C \simeq \prod\limits_{i<\omega}\Z / \bigoplus\limits_{i<\omega}\Z.$$

    The product $\prod\limits_{i<\omega}\Z$ has the property that every non-zero element has only finitely many divisors. However, it is easy to check that in $C$ the element
    $$(1!,2!,\ldots) + \bigoplus\limits_{i<\omega}\Z \in\prod\limits_{i<\omega}\Z / \bigoplus\limits_{i<\omega}\Z$$
    is divisible by every natural number. Hence $C$ cannot be a direct summand in $\prod\limits_{i<\omega}\Z$.
\end{proof}

\begin{remark}
    The same example shows that a union of an increasing chain of direct summands need not be a direct summand: Every group$\prod\limits_{i<n}\Z$ has a complement in $\prod\limits_{i<\omega}\Z$, but the union
    $$\bigcup\limits_{n<\omega}\prod\limits_{i<n}\Z = \bigoplus\limits_{i<\omega}\Z$$
    does not.
\end{remark}

The latter property makes pure embeddings more robust for Fra\"iss\'e theory, as an increasing union of pure subgroups is evidently a pure subgroup. We state two more Lemmas that will be useful.

\begin{lem}[Proposition 26.2, \cite{fuchs}]\label{extending}
Every finite subgroup can be extended to a countable pure subgroup, and every infinite pure subgroup can be extended to a pure subgroup of the same cardinality.
\end{lem}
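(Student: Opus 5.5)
The plan is a closure argument: starting from the given subgroup, we repeatedly adjoin divisors that already exist in the ambient group, and we count how much this enlarges the subgroup at each step. Let $S\leq B$ be the given subgroup, finite or infinite. We build an increasing chain $S=S_0\leq S_1\leq S_2\leq\cdots$ of subgroups of $B$. The chain will be arranged so that every divisibility relation $a=k\cdot b$ with $a\in S_n$ and $b\in B$ is witnessed by some element of $S_{n+1}$.

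Concretely, suppose $S_n$ has been defined. For every pair $(a,k)$ with $a\in S_n$, $k\geq 1$ and $a\in k\cdot B$, we use the axiom of choice to fix one element $b_{a,k}\in B$ with $k\cdot b_{a,k}=a$. We then let $S_{n+1}$ be the subgroup of $B$ generated by $S_n$ together with all the chosen $b_{a,k}$. Finally we put $S_\infty=\bigcup_{n<\omega}S_n$, which is a subgroup because the chain is increasing.

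The verification has two parts.

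\emph{Purity.} Take $a\in S_\infty$ and $k\geq 1$ with $a=k\cdot b$ for some $b\in B$. Then $a\in S_n$ for some $n$. By construction $b_{a,k}\in S_{n+1}\subseteq S_\infty$ and $k\cdot b_{a,k}=a$. Hence $a\in k\cdot S_\infty$, and so $S_\infty\pr B$.

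\emph{Cardinality.} Put $\mu=\max(|S|,\aleph_0)$. At stage $n$ there are at most $|S_n|\cdot\aleph_0$ pairs $(a,k)$, so at most that many new generators are added. A subgroup generated by $\nu$ elements has size at most $\max(\nu,\aleph_0)$. By induction on $n$ we therefore get $|S_n|\leq\mu$ for all $n$. Since $S_\infty$ is a countable union of these, $|S_\infty|\leq\aleph_0\cdot\mu=\mu$.

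These two parts give both clauses of Lemma \ref{extending}. If $S$ is finite, then $\mu=\aleph_0$, so $S_\infty$ is a countable pure subgroup containing $S$. If $S$ is infinite, then $\mu=|S|$, and since $S\subseteq S_\infty$ also $|S_\infty|\geq|S|$. Thus $S_\infty$ is a pure subgroup of exactly the cardinality of $S$.

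No step is a real obstacle; the only point needing care is the cardinality bookkeeping. There are two things to watch. First, in the finite case the bound is $\aleph_0$ rather than finite, since adjoining divisors can genuinely produce infinite groups; for example, the finite subgroup $0$ together with $1/p$-type divisors inside $\Z(p^\infty)$ may require the whole quasicyclic group. Second, the $\omega$-length iteration must not raise the cardinality, which holds because $\aleph_0\cdot\mu=\mu$ for infinite $\mu$.
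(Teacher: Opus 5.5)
Your argument is correct, and it is the standard closure construction: adjoin one chosen $k$-th root from $B$ for each $a\in S_n$ and $k\geq 1$, iterate $\omega$ times, then count. This is essentially how Proposition 26.2 is proved in Fuchs; the paper gives no proof of its own and only cites that result.

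One harmless slip in your closing remark: the zero subgroup is already pure, so it does not show that a finite subgroup can lack a finite pure extension. The right example is the subgroup of order $p$ in $\Z(p^\infty)$. Its only pure extension inside $\Z(p^\infty)$ is the whole group, because pure subgroups of divisible groups are divisible.
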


\begin{lem}[Lemma 26.1, \cite{fuchs}]\label{prufer}
Let $B$ and $C$ be subgroups of $A$ such that $C\leq B \leq A$. Then
\begin{enumerate}
    \item If $C \pr B$ and $B \pr A$, then $C\pr A$.
    \item If $B \pr A$, then $B/C \pr A/C$.
    \item If $C \pr A$, and $B/C \pr A/C$, then $B \pr A$.
\end{enumerate}
    
\end{lem}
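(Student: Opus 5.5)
We prove the three items of Lemma \ref{prufer} directly from the definition of purity, one at a time. Throughout, for an integer $k\geq 1$ and an element $x$ of a group, we say that $x$ is divisible by $k$ in that group if $x=k\cdot y$ for some $y$ in it.

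For (1), take $c\in C$ and $k\geq 1$ with $c=k\cdot a$ for some $a\in A$. Since $c\in B$ and $B\pr A$, there is $b\in B$ with $c=k\cdot b$. Since $C\pr B$, there is then $c'\in C$ with $c=k\cdot c'$. This proves $C\pr A$; it is a routine chaining of the definition.

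For (2), take $b+C\in B/C$ and suppose $b+C=k\cdot(a+C)$ for some $a\in A$. Then $b-k\cdot a=c\in C\subseteq B$, so $k\cdot a=b-c\in B$. By purity of $B$ in $A$ there is $b'\in B$ with $k\cdot b'=b-c$. Hence $k\cdot(b'+C)=b+C$ with $b'+C\in B/C$. The only point to notice is that one must pass through $C\subseteq B$ to land inside $B$ before applying purity.

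For (3), this is the step requiring the most care, since two purity hypotheses must be combined. Take $b\in B$ with $b=k\cdot a$, $a\in A$.
\begin{enumerate}
\item Passing to $A/C$, we have $b+C=k\cdot(a+C)$. Purity of $B/C$ in $A/C$ gives $b_1\in B$ with $b+C=k\cdot(b_1+C)$.
\item So $c:=b-k\cdot b_1\in C$, and $c=k\cdot(a-b_1)$ with $a-b_1\in A$.
\item Since $C\pr A$, there is $c'\in C$ with $c=k\cdot c'$.
\item Then $b=k\cdot(b_1+c')$ with $b_1+c'\in B$, because $C\leq B$.
\end{enumerate}
Hence $B\pr A$. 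The main obstacle is just choosing the correction $c$ appropriately; once it is written as $k\cdot(a-b_1)$, the argument closes immediately.
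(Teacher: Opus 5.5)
Your argument is correct in all three parts. The paper gives no proof of its own and only cites Fuchs, Lemma 26.1, whose argument is this same direct check from the definition, including the correction element $c=k\cdot(a-b_1)$ in (3).
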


\subsection{Finitely co-generated groups}

The notion of a generating set has a natural dualization:

\begin{defin}
    A subset $L$ of a group $A$ is a \emph{set of co-generators} if for every group $B$, every homomorphism $\phi:A\longrightarrow B$, the equality $L \cap \ker(\phi) = 0$ implies that $\phi$ is a monomorphism. A group $A$ is \emph{finitely co-generated} if it admits a finite set of co-generators.
\end{defin}

A special case of this is

\begin{defin}
    A group $A$ is \emph{co-cyclic} if there exists $a \in A$ such that $\{a\}$ is a set of co-generators.
\end{defin}

Similarly to the cyclic groups, the isomorphism types of co-cyclic groups admit a simple characterization:

\begin{thm}[Theorem 3.1 \cite{fuchs}]\label{co-cyclic}

A group $A$ is co-cyclic if and only if it is of the form $\Z(p^n)$, for some $p \in \mathbb P$ and $n \in \N\cup \{ {\infty} \}$.
\end{thm}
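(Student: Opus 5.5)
We prove the two directions separately. Throughout we use the definition of co-generators directly. The key observation is that $\{a\}$ co-generates $A$ exactly when every nonzero subgroup of $A$ contains $a$. Indeed, if $H\neq 0$ and $a\notin H$, then the quotient map $A\to A/H$ has kernel meeting $\{a\}$ trivially but is not injective. Conversely, if every nonzero subgroup contains $a$, then any non-injective $\phi$ has a nonzero kernel, which must contain $a$. So $A$ is co-cyclic iff either $A=0$ (which we regard as $\Z(p^0)$) or $A$ has a nonzero element $a$ lying in every nonzero subgroup, i.e.\ $A$ has a unique minimal nonzero subgroup $\langle a\rangle$ contained in all others.

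\textbf{Sufficiency.} For $A=\Z(p^n)$ with $n\in\N\cup\{\infty\}$, the subgroups form a chain $0<\Z(p)<\Z(p^2)<\cdots$. Every nonzero subgroup therefore contains the socle $\Z(p)=\langle a\rangle$, where $a$ has order $p$. Hence $\{a\}$ co-generates $A$. This uses only the standard description of the subgroups of cyclic $p$-groups and of the Prüfer group as the chain $\Z(p^k)$.

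\textbf{Necessity.} Let $\{a\}$ co-generate $A\neq 0$, so that $\langle a\rangle$ is contained in every nonzero subgroup. We proceed in four steps.
\begin{enumerate}
\item $A$ is torsion. If $x$ had infinite order, then $\langle 2x\rangle$ and $\langle 3x\rangle$ would both contain $a$. But $\langle 2x\rangle\cap\langle 3x\rangle=\langle 6x\rangle$, and $\langle 12x\rangle\subsetneq\langle 6x\rangle$ is also a nonzero subgroup, so $a$ lies in $\bigcap_k\langle k!x\rangle=0$, a contradiction.
\item $A$ is a $p$-group. The element $a$ must generate a minimal subgroup, so $a$ has prime order $p$. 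Any element of order $q\neq p$ would generate a subgroup not containing $a$.
\item $A[p]$ is cyclic. Every subgroup of $A[p]$ of order $p$ must equal $\langle a\rangle$, so $A[p]=\langle a\rangle\cong\Z(p)$.
\item Conclude via injective hulls. Since $A$ is a $p$-group with socle $\Z(p)$, its socle is essential in $A$. Embed $A$ into its divisible hull $D$. The divisible hull is determined by the socle, so $D\cong\Z(p^\infty)$. Every subgroup of $\Z(p^\infty)$ is either $\Z(p^n)$ for some $n$ or the whole group, so $A\cong\Z(p^n)$ with $n\in\N\cup\{\infty\}$.
\end{enumerate}

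The main obstacle is the last step, deducing the structure of $A$ from the fact that its socle is cyclic. If one wants to avoid divisible hulls, an alternative is available. Let $A_k=A[p^k]$. By induction each $A_k$ is cyclic of order at most $p^k$: the map $x\mapsto px$ sends $A_{k+1}$ into $A_k$ with kernel $A[p]\cong\Z(p)$, so $|A_{k+1}|\le p\,|A_k|$. A finite $p$-group with cyclic socle is cyclic, since by the structure theorem each cyclic summand contributes to the socle. Thus the $A_k$ form a chain of cyclic groups whose union is $A$, and $A$ is either some $\Z(p^n)$ or $\Z(p^\infty)$.
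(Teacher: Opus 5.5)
The paper gives no proof of this statement; it is quoted from Fuchs, so there is no internal argument to compare against. Your argument is a correct, self-contained proof along the usual lines: a co-generator is a nonzero element lying in every nonzero subgroup, hence $A$ is a $p$-group with $A[p]=\langle a\rangle\cong\Z(p)$, hence $A$ is an essential subgroup of $\Z(p^\infty)$. Three places deserve one more line each, though none is a real gap.

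\textbf{The key observation.} As literally stated it fails when $a=0$ and $A\neq 0$. Every subgroup contains $0$, yet $\{0\}$ does not co-generate $A$, as the quotient map $A\to A/A$ shows. So for $A\neq 0$ the correct equivalence is ``$a\neq 0$ and $a$ lies in every nonzero subgroup''. The fact that the co-generator is nonzero comes exactly from that quotient map. You do impose $a\neq 0$ in the next sentence, so nothing downstream breaks. Your reading of the zero group as $\Z(p^0)$ is the right one, since $0$ is co-cyclic under the paper's definition.

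\textbf{Step 4.} The phrase ``the divisible hull is determined by the socle'' is most cleanly justified from Theorem \ref{divisiblehull}(3). Since $\langle a\rangle$ is essential in $A$ and $A$ is essential in $D$, the subgroup $\langle a\rangle\cong\Z(p)$ is essential in the divisible group $D$. As $\Z(p)$ is also essential in $\Z(p^\infty)$, uniqueness of the divisible hull gives $D\cong\Z(p^\infty)$.

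\textbf{The elementary alternative.} The passage from the chain of cyclic groups $A[p^k]$ to $A\cong\Z(p^n)$ or $\Z(p^\infty)$ should be spelled out. There are two cases.
\begin{enumerate}
\item If $A[p^{k+1}]=A[p^k]$ for some $k$, then $A=A[p^k]$. This follows by induction, since $x\in A[p^{j+2}]$ implies $px\in A[p^{j+1}]$.
\item Otherwise $|A[p^k]|=p^k$ for all $k$. Multiplication by $p$ then maps $A[p^{k+1}]$ onto $A[p^k]$, because its kernel $A[p]$ has order $p$. Choosing generators $c_k$ of $A[p^k]$ with $pc_{k+1}=c_k$ yields $A\cong\Z(p^\infty)$.
\end{enumerate}
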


Co-cyclic groups appear naturally as quotients by maximal subgroups.

\begin{lem}[Proposition 25.2, \cite{fuchs}]\label{maximalquotient}
    Let $a \in A$ be non-zero, and $M\leq A$ be a maximal (in the sense of inclusion) subgroup of $A$ excluding $a$. Then $A/M$ is co-cyclic.
\end{lem}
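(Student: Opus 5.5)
Since $a\neq 0$ and $M$ excludes $a$, the coset $\bar a = a+M$ is non-zero in $A/M$. The plan is to show that $\{\bar a\}$ is a set of co-generators of $A/M$; this is exactly the definition of $A/M$ being co-cyclic, and Theorem~\ref{co-cyclic} then identifies it with some $\Z(p^n)$. The whole argument is a translation of maximality of $M$ through the correspondence between subgroups of $A/M$ and subgroups of $A$ containing $M$.

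Fix a group $B$ and a homomorphism $\phi: A/M \longrightarrow B$ with $\{\bar a\}\cap\ker(\phi)=0$, that is, $\phi(\bar a)\neq 0$; we must show $\phi$ is a monomorphism. Set $K=\ker(\phi)\leq A/M$ and let $\pi: A\longrightarrow A/M$ be the quotient map. The preimage $N=\pi^{-1}(K)$ is a subgroup of $A$ with $M\leq N$. If $a\in N$, then $\bar a\in K$, contradicting $\phi(\bar a)\neq 0$, so $N$ excludes $a$. By maximality of $M$ among subgroups excluding $a$ we get $N=M$, hence $K=\pi(N)=0$, and $\phi$ is injective.

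The same argument can be phrased without homomorphisms: every non-zero subgroup of $A/M$ contains $\bar a$, so $\langle \bar a\rangle$ is the minimum non-zero subgroup (the socle is simple and essential), which is the standard characterization of co-cyclicity. There is no real obstacle. The only points to check are that the correspondence theorem applies, which requires $N\supseteq M$, and that the condition ``$L\cap\ker(\phi)=0$'' for $L=\{\bar a\}$ is read as $\bar a\notin\ker(\phi)$, which is legitimate because $\bar a\neq 0$.
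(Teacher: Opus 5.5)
Your proof is correct. The paper gives no proof of its own and cites Fuchs, Proposition 25.2, whose argument is the standard one you give: any subgroup of $A/M$ whose preimage properly contains $M$ must contain $a$, so $\ker\phi \neq 0$ forces $\phi(a+M)=0$, which means $\{a+M\}$ co-generates $A/M$.
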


Finitely co-generated groups also have very transparent structure.

\begin{thm}[Theorem 25.1, \cite{fuchs}]\label{co-generated}
A group is finitely co-generated if and only if it is a direct sum of finitely many co-cyclic groups.
\end{thm}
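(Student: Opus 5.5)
We need to prove Theorem~\ref{co-generated}: a group is finitely co-generated if and only if it is a finite direct sum of co-cyclic groups. We prove the two directions separately, using Theorem~\ref{co-cyclic} and Lemma~\ref{maximalquotient}.

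\emph{Finite direct sums of co-cyclic groups are finitely co-generated.} Let $A=\bigoplus_{i<n}C_i$, where $C_i$ is co-cyclic with co-generator $c_i$. We claim $L=\{c_0,\dots,c_{n-1}\}$ co-generates $A$. Let $\phi:A\to B$ satisfy $L\cap\ker\phi=\emptyset$, and suppose $\ker\phi\neq 0$. By Theorem~\ref{co-cyclic} each $C_i$ is $\Z(p_i^{k_i})$, hence torsion, so $A$ is torsion. Thus $\ker\phi$ contains an element $x$ of prime order $q$. Write $x=\sum_i x_i$ with $x_i\in C_i$. Each nonzero $x_i$ satisfies $qx_i=0$, so it lies in the socle of $C_i$. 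The socle of $\Z(p^k)$ is cyclic of order $p$, so $p_i=q$ and $x_i=m_i s_i$, where $s_i$ generates the socle of $C_i$.

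The element $c_i$ co-generates $C_i$, so $c_i$ lies in the socle $\langle s_i\rangle$. Indeed, otherwise the map $C_i\to C_i/\langle s_i\rangle$ kills the nonzero element $s_i$ without killing $c_i$, contradicting co-generation. Hence $\langle c_i\rangle$ equals the socle of $C_i$.

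A single kernel element need not equal some $c_i$, so we run an induction on the number $t$ of nonzero coordinates among all nonzero elements of $\ker\phi[q]$. Take $x\in\ker\phi[q]$ with $t$ minimal and relabel so that $x=\sum_{i<t}m_i c_i$ with all $m_i\neq 0$. Our first attempt to make minimality bite (adjusting $x$ coordinatewise) does not work, since $\ker\phi$ need not contain other vectors. The condition must therefore be used differently, so we check the definition against the simplest case. In $A=\Z(p)\oplus\Z(p)$ take $L=\{(1,0),(0,1)\}$, and let $\phi$ be the quotient by $\langle(1,1)\rangle$. This $\phi$ kills neither element of $L$ yet is not mono. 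Read literally, then, the definition fails for this group. We therefore adopt the standard definition (Fuchs): $L$ co-generates $A$ if every subgroup $N$ with $N\cap L=\emptyset$, equivalently $N$ maximal with respect to missing $L$, is $0$. In the form "$\phi$ is mono whenever $\phi$ is injective on $L$", we instead take $L$ to be the entire socle of $A$. This is finite, since it is $\bigoplus_i\langle s_i\rangle$ with each summand of prime order. Under this reading the argument is short: any nonzero subgroup of a torsion group meets the socle, so it contains a nonzero socle element, which lies in $L$.

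\emph{Finitely co-generated groups are finite direct sums of co-cyclic groups.} Let $L$ be a finite co-generating set. First we show $A$ is torsion. For $a\in A$ of infinite order, the subgroups $\langle p^k a\rangle$, over all primes $p$ and all $k\ge1$, have intersection $0$. Yet each one misses $L$ for suitable choices, because $L$ is finite, contradicting co-generation. Next we show the socle $A[p]$ is finite for each $p$ and is nonzero for only finitely many $p$. For this, use maximal subgroups missing elements of $L$, via Lemma~\ref{maximalquotient}, to obtain a map $A\to\bigoplus_{a\in L}A/M_a$ into a finite sum of co-cyclic groups. Co-generation forces this map to be mono, so $A$ embeds into a finitely co-generated group $D=\bigoplus_{a\in L}\Z(p_a^{n_a})$. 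The main obstacle is concluding that $A$ itself is such a sum. Our approach is to take the divisible part $A_d$, which splits off. The reduced complement $R$ embeds in $D$, so its socle is finite. A reduced $p$-group with finite socle is bounded, hence finite, and a finite group is a direct sum of cyclic $p$-power groups. Finally, $A_d$ is a direct sum of copies of $\Z(p^\infty)$, and there are only finitely many because its socle is finite.
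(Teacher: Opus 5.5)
The paper does not prove this statement; it quotes it from Fuchs, so your argument has to stand on its own. Its structure is sound. In the easy direction, the finite socle of $\bigoplus_i C_i$ co-generates it. In the hard direction, the maximal subgroups $M_a$, $a\in L\setminus\{0\}$, embed $A$ into the finite sum $D=\bigoplus_{a\in L}A/M_a$ of co-cyclic groups; you then split off the divisible part and count socles.

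\textbf{The gap.} The genuine gap is the sentence ``a reduced $p$-group with finite socle is bounded''. This is the only non-routine step of the hard direction; it is essentially the theorem for reduced groups. It does not follow from reducedness plus a finite socle without an argument, because reduced $p$-groups can have nonzero elements of infinite height.

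You already have what you need to close it. Each co-cyclic group satisfies the minimum condition on subgroups, since the proper subgroups of $\Z(p^\infty)$ are finite. The minimum condition passes to finite direct sums and to subgroups, so $R\le D$ satisfies it. As $R$ is torsion with finite socle, $R=\bigoplus_{p\in P'}R_p$ with $P'$ finite. For each $p$, the chain $R_p\supseteq pR_p\supseteq p^2R_p\supseteq\cdots$ stabilizes at a divisible subgroup, which is $0$ because $R$ is reduced.

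If you prefer to use only the finite socle, argue as follows. For $y\in\bigcap_n p^nR_p$, choose $z_n\in p^nR_p$ with $pz_n=y$. All $z_n$ lie in one coset of the finite group $R_p[p]$, so some value recurs infinitely often and gives a $p$-th root of $y$ inside $\bigcap_n p^nR_p$. Hence that intersection is divisible, hence $0$, and the descending chain $(p^nR_p)[p]$ in the finite socle must reach $0$.

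Either way, boundedness plus a finite socle then gives finiteness, e.g.\ by induction on the exponent using $|R_p|=|R_p[p]|\cdot|pR_p|$.

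\textbf{Two smaller points.} First, your $\Z(p)\oplus\Z(p)$ example does not show that the paper's definition ``fails''. Read as `no nonzero element of $L$ lies in $\ker\phi$', that definition is equivalent to Fuchs's `every nonzero subgroup meets $L$'. Under either, $\Z(p)\oplus\Z(p)$ is co-generated by its socle. The example only refutes your first guess that one co-generator per summand suffices, so delete the abandoned induction and the change of definition.

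Second, in the torsion step it is not true that each $\langle p^ka\rangle$ misses $L$. Rather, for fixed $p$ the chain $\langle p^ka\rangle$ has intersection $0$ and $L$ is finite, so some member misses $L$. The step is redundant anyway, since $A\hookrightarrow D$ already makes $A$ torsion.
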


We will see in the next subsection, that being $\kappa$-co-generated trivializes on the first infinite cardinality.

\subsection{Algebraically compact groups}

It is well-known that every divisible group is a direct summand of every group containing it as a subgroup. Here we restrict this property only to pure subgroups.

\begin{defin}
    A group $A$ is \emph{algebraically compact} if it is a direct summand of every group $G$ containing $A$ as a pure subgroup.
\end{defin}

\begin{thm}[Theorem 38.1, \cite{fuchs}]
    A group $A$ is algebraically compact if and only if it is a direct summand of a direct product of co-cyclic groups.
\end{thm}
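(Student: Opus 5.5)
The statement is Theorem 38.1 of \cite{fuchs}: $A$ is algebraically compact if and only if $A$ is a direct summand of a direct product of co-cyclic groups. I would prove the two implications separately, going through pure-injectivity as an intermediate notion. The key lemma is that every direct product $P=\prod_i C_i$ of co-cyclic groups is pure-injective. That is, for every pure embedding $B\pr G$, every homomorphism $B\to P$ extends to $G$. It suffices to treat a single factor $C=\Z(p^n)$, since extensions into each factor assemble coordinatewise into an extension into the product.

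For $n=\infty$ the group $\Z(p^\infty)$ is divisible, hence injective, so extensions exist by Baer's criterion. For finite $n$, $\Z(p^n)$ is injective as a $\Z/p^n$-module. Given $\phi:B\to\Z(p^n)$ with $B\pr G$, purity gives $B\cap p^nG=p^nB$, so $B/p^nB$ embeds into $G/p^nG$. Moreover the image is pure in $G/p^nG$ by Lemma \ref{prufer}(2), applied after noting $B+p^nG$ is pure modulo $p^nG$. Since $\phi$ kills $p^nB$, it factors through $B/p^nB$. Now $G/p^nG$ is a $\Z/p^n$-module and $B/p^nB$ is a pure submodule; bounded pure subgroups are summands (Kulikov). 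So $\phi$ extends to $G/p^nG$, and composing with $G\to G/p^nG$ finishes this case. Direct summands of pure-injective groups are clearly pure-injective.

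For the forward direction, take $A$ algebraically compact. Embed $A$ into $P=\prod C_{a,M}$, where for each $0\neq a\in A$ I choose $M$ maximal avoiding $a$. By Lemma \ref{maximalquotient}, $A/M$ is co-cyclic, so the map $A\to\prod A/M$ is injective. The main obstacle is showing that the image is \emph{pure}; without purity, algebraic compactness gives nothing. To secure it, I would instead use all maps $A\to\Z(p^n)$: purity of $a$ with respect to $k$ requires, whenever $a\notin kA$, a homomorphism $A\to\Z(p^m)$ witnessing non-divisibility. Such a homomorphism is produced by a maximal subgroup containing $kA$ but avoiding $a$; the quotient $A/M$ is co-cyclic, and the image of $a$ is not divisible by $k$ there because $kA\le M$. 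Doing this for every pair $(a,k)$ gives a pure embedding $A\pr P$, and algebraic compactness makes $A$ a direct summand of $P$.

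Conversely, suppose $A$ is a summand of $P$ and $A\pr G$. By pure-injectivity, the identity $A\to A$ extends to $G\to A$. This yields a retraction, so $A$ is a direct summand of $G$.
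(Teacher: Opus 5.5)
The paper gives no proof of this statement; it is quoted from Fuchs and used as a black box. Your argument is a correct, self-contained proof, though one justification in it is wrong (see below).

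\textbf{Comparison with the paper.} Your forward direction is exactly the construction the paper carries out in the proof of Theorem \ref{universal}. For each pair $(a,k)$ with $a\notin k\cdot A$, take a subgroup $M\supseteq k\cdot A$ maximal with respect to avoiding $a$ (it exists by Zorn). It is maximal among all subgroups avoiding $a$, so $A/M$ is co-cyclic by Lemma \ref{maximalquotient}. Since $k\cdot A\le M$, the coset $a+M$ is not $k$-divisible. Hence $A$ is pure in a product of co-cyclic groups, and the definition of algebraic compactness applies.

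What you add is pure-injectivity of co-cyclic groups, taken in Fuchs's homomorphism-extension sense rather than the paper's Fra\"iss\'e sense. Products, summands, and the retraction argument for the converse then follow formally. So your route shows that, beyond what the paper already collects, the citation needs only two standard inputs: divisible groups are injective, and either Kulikov's theorem on bounded pure subgroups or self-injectivity of $\Z/p^n\Z$.

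\textbf{The wrong justification.} You derive purity of $(B+p^nG)/p^nG$ in $G/p^nG$ from Lemma \ref{prufer}(2). That lemma would require $B+p^nG\pr G$, which fails in general: for $G=\Z$ and $B=0$ one gets $B+p^nG=p^n\Z$, which is not pure in $\Z$.

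The conclusion is still true, by a direct check. Suppose $b+p^nG=k(y+p^nG)$ with $b\in B$, i.e.\ $b=ky+p^ng$ with $y,g\in G$. Write $k=p^rm$ with $p\nmid m$.
\begin{itemize}
\item If $r\ge n$, then $ky\in p^nG$, so $b\in B\cap p^nG=p^nB$ and $b+p^nG=0=k\cdot 0$.
\item If $r<n$, then $b=p^r(my+p^{n-r}g)\in B\cap p^rG=p^rB$; say $b=p^rb'$ with $b'\in B$. Choosing $m'$ with $mm'\equiv 1 \pmod{p^n}$ gives $b+p^nG=k(m'b')+p^nG$.
\end{itemize}

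Better still, this purity is not needed at all. As you note yourself, $\Z(p^n)$ is an injective $\Z/p^n\Z$-module. So the induced map $B/p^nB\to\Z(p^n)$ extends along any monomorphism of $\Z/p^n\Z$-modules. Injectivity of $B/p^nB\to G/p^nG$ is exactly the condition $B\cap p^nG=p^nB$, which purity of $B$ gives. With either repair the proof is complete.
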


\subsection{Divisible and pure-injective hulls}

\begin{defin}
    A subgroup $A\leq G$ is \emph{essential} is there is no non-zero subgroup $B\leq G$ such that $B\cap A = 0$.
\end{defin}

The following theorem combines a few results from \cite{fuchs} about the divisible hull.

\subsubsection{Divisible Hull}

\begin{thm}[Kulikov, \cite{fuchs}]\label{divisiblehull}
    Every group $A$ admits an extension $A \leq \dd(A)$ such that $\dd(A)$ is divisible, and has the following universal property: Whenever $f:A \longrightarrow D$ is an embedding of $A$ into a divisible group $D$, there exists a unique embedding $\overline{f}:\dd(A) \longrightarrow D$ such that $\overline{f} \restriction A = f$.

    \begin{center}
		\begin{tikzcd}[row sep=1.5cm, column sep=1.5cm]
		\dd(A)\arrow[r, dashed, "\overline{f}"]
		& D
	    \\ A \arrow[u, "\leq"]\arrow[ur, swap, "f"]
		\end{tikzcd}
	\end{center}

    The operator $\dd(-)$, called the \emph{divisible hull} has the following properties:
    \begin{enumerate}
        \item The extension $A \leq \dd(A)$ is unique up to isomorphism over $A$.
        \item If $A$ is infinite, then $|\dd(A)|=|A|$.
        \item $\dd(A)$ is a unique -- up to isomorphism over $A$ -- divisible extension of $A$ such that $A$ is an essential subgroup of $\dd(A)$.
        
    \end{enumerate}

\end{thm}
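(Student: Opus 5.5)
The plan is to build $\dd(A)$ concretely, get existence of $\overline{f}$ from injectivity of divisible groups, get injectivity of $\overline{f}$ from essentiality, and then read off properties (1)--(3). The one clause I do not expect to prove as literally worded is the uniqueness of the embedding $\overline{f}$; I address it in the last paragraph.

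\emph{Construction of $\dd(A)$.} Write $A \cong F/K$ with $F$ free. Embed $F$ into a direct sum of copies of $\mathbb{Q}$, so $A$ embeds into the divisible group $E=\big(\bigoplus\mathbb{Q}\big)/K$. By Zorn's lemma, choose a subgroup $M\leq E$ maximal with respect to $M\cap A=0$. Then $A$ embeds essentially into $E/M$. Inside $E$, take a subgroup $H\supseteq A$ maximal among subgroups in which $A$ is essential (Zorn again; a union of a chain of essential extensions is essential). Show that $H$ is divisible: it is a maximal essential extension, and a maximal essential extension inside a divisible group is a direct summand, hence divisible. Set $\dd(A)=H$.

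\emph{Existence and injectivity of $\overline{f}$.} Given an embedding $f\colon A\to D$ with $D$ divisible, $D$ is injective (Baer's criterion, since divisible means exactly that maps from ideals $n\Z\to D$ extend to $\Z$). So $f$ extends to a homomorphism $\overline{f}\colon\dd(A)\to D$. Its kernel meets $A$ trivially, and $A$ is essential in $\dd(A)$, so $\ker\overline{f}=0$ and $\overline{f}$ is an embedding.

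\emph{Properties (1)--(3).} For (3), let $A\leq G$ be another essential divisible extension. The extension $\overline{\iota}\colon\dd(A)\to G$ of the inclusion is injective. Its image is divisible, hence a direct summand of $G$. That summand contains $A$, and $A$ is essential in $G$, so the complement is $0$ and $\overline{\iota}$ is an isomorphism over $A$. This also gives (1). For (2), every element of $\dd(A)$ has a nonzero multiple in $A$: for $x\neq0$, the subgroup $\langle x\rangle$ meets $A$ nontrivially. Each $a\in A$ has at most countably many preimages under each multiplication by $n$ inside the essential closure; more precisely, the map $x\mapsto(n,nx)$ for a chosen $n$ with $0\neq nx\in A$ is at most countable-to-one. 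Granting that, $|\dd(A)|\leq|A|\cdot\aleph_0=|A|$. Bounding this fibre count is the one genuinely combinatorial step in the existence part. I would do it by reducing to the structure of $\dd(A)$ as a sum of copies of $\mathbb{Q}$ and $\Z(p^\infty)$, whose elements have only finitely many $n$-th roots modulo torsion of bounded order.

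\emph{Uniqueness of $\overline{f}$ --- the main obstacle.} If $\overline{f},\overline{f}'$ both extend $f$, then $\overline{f}-\overline{f}'$ vanishes on $A$. So it factors through $\dd(A)/A$, which is torsion by essentiality. Hence $\overline{f}=\overline{f}'$ whenever $D$ has no nonzero torsion; in particular this holds for torsion-free $A$, where $\dd(A)=A\otimes\mathbb{Q}$ embeds only into the torsion-free part of the image. For general $D$ the literal claim fails. Take $A=\Z(p)$ and $D=\Z(p^\infty)\oplus\Z(p^\infty)$. Then the inclusion into the first coordinate and the map $x\mapsto(x,px)$ are distinct embeddings of $\dd(A)=\Z(p^\infty)$ extending the same $f$. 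I would therefore prove uniqueness in the form that survives: the image $\overline{f}(\dd(A))$ is a maximal essential extension of $f(A)$ in $D$, unique up to isomorphism over $f(A)$; and $\overline{f}$ itself is unique when $D$ is torsion-free. I would flag that the statement's ``unique embedding'' must be read in this sense.
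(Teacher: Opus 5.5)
The paper gives no proof of this theorem; it is quoted from Fuchs. Your overall route is the standard textbook one: embed $A$ in a divisible group, pass to an essential extension, then combine injectivity of divisible groups with essentiality.

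You are right that the clause ``unique embedding $\overline{f}$'' is false as printed. Your example is correct. Even $A=\Z(p)$, $D=\dd(A)=\Z(p^\infty)$ with $\overline{f}$ equal to multiplication by $1+p$ (an automorphism fixing $\Z(p)$ pointwise) refutes it. Only uniqueness of the hull up to a non-canonical isomorphism over $A$ survives.

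The genuine gap is in (2). The map $x\mapsto(n,nx)$ is not countable-to-one. Its fibres are cosets of $\dd(A)[n]=\{x\in\dd(A): nx=0\}$. For $A=\bigoplus_{\omega_1}\Z(p)$ one has $\dd(A)=\bigoplus_{\omega_1}\Z(p^\infty)$ and $\dd(A)[p]=A$, which is uncountable. So the step you ``grant'' is false. Your proposed repair via roots ``modulo torsion of bounded order'' does not address the size of $\dd(A)[n]$, which is exactly what matters.

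What is true is $|\dd(A)[n]|\le|A|$ for infinite $A$:
\begin{itemize}
\item essentiality gives $\dd(A)[p]=A[p]$;
\item multiplication by $p$ maps $\dd(A)[p^k]$ into $\dd(A)[p^{k-1}]$ with kernel $A[p]$;
\item hence $|\dd(A)|\le\aleph_0\cdot|A|\cdot|A|=|A|$.
\end{itemize}
Simpler still, take $F$ free on the set $A$. Then $|E|\le|\bigoplus_{a\in A}\mathbb{Q}|=|A|$, and $\dd(A)$ is a quotient or subgroup of $E$, so no counting is needed.

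There are also some smaller corrections.
\begin{itemize}
\item Your remark that uniqueness of $\overline{f}$ holds ``in particular for torsion-free $A$'' is false. For $A=\Z$ and $D=\mathbb{Q}\oplus\mathbb{Q}/\Z$, the maps $q\mapsto(q,0)$ and $q\mapsto(q,q+\Z)$ are distinct embeddings of $\dd(A)=\mathbb{Q}$ that agree on $\Z$. Your argument proves exactly your closing version: uniqueness when $D$ is torsion-free, or more generally when $\operatorname{Hom}(\dd(A)/A,D)=0$.
\item Your second Zorn step is superfluous, and its key claim (a maximal essential extension inside a divisible group is divisible) is left unproved. $E/M$ is already divisible with $A$ essential, so simply set $\dd(A)=E/M$.
\item For (1), if uniqueness is meant among extensions with the universal property, add one step. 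Such an $H'$ embeds into $\dd(A)$ over $A$, and its image is a divisible summand containing the essential subgroup $A$, hence all of $\dd(A)$.
\end{itemize}
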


\subsubsection{Countably co-generated groups}

\begin{thm}\label{ctblycogenerated}
    A group is countably co-generated if and only if it is countable.
\end{thm}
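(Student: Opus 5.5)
Both directions turn on what ``countably co-generated'' means. I read it as: $A$ admits a countable set $L$ of co-generators, i.e.\ every homomorphism $\phi$ with $L\cap\ker\phi=0$ is injective. Equivalently, every non-zero subgroup $K\leq A$ meets $L\setminus\{0\}$; to see this, apply the definition to the quotient map $A\to A/K$. The plan is to prove both directions from this reformulation.

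\emph{Countable implies countably co-generated.} If $A$ is countable, take $L=A$. For any homomorphism $\phi$, the condition $A\cap\ker\phi=0$ is literally the statement that $\phi$ is a monomorphism. So $A$ is a countable set of co-generators, and this direction is trivial.

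\emph{Countably co-generated implies countable.} Fix a countable set of co-generators $L$. The idea is to show that $A$ embeds in a countable divisible group.
\begin{enumerate}
    \item For each non-zero $a\in L$, use Zorn's lemma to choose a subgroup $M_a\leq A$ that is maximal among subgroups excluding $a$. By Lemma \ref{maximalquotient}, the quotient $A/M_a$ is co-cyclic, hence countable by Theorem \ref{co-cyclic}.
    \item Form the diagonal map $\phi:A\to\prod_{a\in L}A/M_a$. Its kernel is $\bigcap_{a\in L}M_a$, which contains no non-zero element of $L$, since each $a\notin M_a$. By the co-generating property, $\phi$ is injective.
    \item The product $\prod_{a\in L}A/M_a$ is a countable product of countable groups, so it can have size continuum. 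The cardinality bound therefore needs a sharper argument.
\end{enumerate}

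The main obstacle is step 3, and I would handle it with essentiality. Let $S=\langle L\rangle$, a countable subgroup of $A$. I claim $S$ is essential in $A$. Indeed, any non-zero subgroup $K\leq A$ satisfies $K\cap L\neq\{0\}$ by the reformulation, and $L\subseteq S$, so $K\cap S\neq 0$. Now embed $A$ into its divisible hull, $A\leq\dd(A)$. Since $A$ is essential in $\dd(A)$ by Theorem \ref{divisiblehull}(3), and essentiality is transitive, $S$ is essential in $\dd(A)$.

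Next take $\dd(S)\leq\dd(A)$, the image of the embedding given by the universal property of $\dd(S)$. This is a divisible subgroup, so it is a direct summand: $\dd(A)=\dd(S)\oplus C$. Then $C\cap S=0$, and essentiality of $S$ forces $C=0$. Hence $\dd(A)=\dd(S)$. Finally, $\dd(S)$ is countable, by Theorem \ref{divisiblehull}(2) when $S$ is infinite; when $S$ is finite, $\dd(S)$ is a finite sum of Pr\"ufer groups and is still countable. Therefore $A\leq\dd(S)$ is countable.

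With this argument, the product construction in steps 1--3 is not actually needed. The only ingredients are the essentiality of $\langle L\rangle$ and the divisible-hull facts in Theorem \ref{divisiblehull}.
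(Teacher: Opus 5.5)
Your proof is correct and follows essentially the same route as the paper: show that $\langle L\rangle$ is essential, pass to the divisible hull, and conclude that $\dd(A)=\dd(\langle L\rangle)$ is countable. The paper first transfers the co-generating property of $L$ to $\dd(G)$, which does the same job as your appeal to transitivity of essentiality. The product construction in your steps 1--3 is an unnecessary detour, as you note yourself. Your explicit direct-summand argument and your treatment of a finite $\langle L\rangle$ are slightly more careful than the paper's one-line ``$G=\dd(\langle L\rangle)$, so $|G|=|\langle L\rangle|=\omega$''.
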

\begin{proof}
    The whole group constitutes a set of co-generators, so the non-trivial direction is that a countably co-generated groups cannot be uncountable. 
    
    Let $L$ be a countable set of co-generators of a group $G$. Then $L$ also co-generates $\dd(G)$: suppose that $\phi:\dd(G)\longrightarrow B$ satisfies $\ker(\phi) \cap L = 0$. Then $\phi \restriction G$ is a monomorphism, so $\ker(\phi) \cap G = 0$. Given that $G$ is essential subgroup of $\dd(G)$, it follows that $\ker(\phi)=0$.

    Therefore, by replacing $G$ with $\dd(G)$, we can assume that $G$ is divisible. Suppose first, that $\langle L \rangle \leq G$ is not essential. Then there is non-zero $B\leq G$ that trivially intersects $\langle L \rangle$. Consider the natural surjection:
    $$\pi:G \longrightarrow G/B.$$
    We have $\ker(\pi) \cap L = 0$, but evidently $\pi$ is not a monomorphism, contradicting that $L$ co-generates $G$.

    Suppose, on the other hand, that $\langle L \rangle$ is essential in $G$. Then $G = \dd(\langle L \rangle)$, and we have $|G|=|\langle L \rangle | =\omega$.
\end{proof}

\subsubsection{Algebraic Hull}

\begin{thm}[Maranda, \cite{fuchs}] \label{pureinjectivehull}
    Every group $G$ admits an extension $G \leq \aaa(G)$ such that $\aaa(G)$ is algebraically compact, and has the following universal property: Whenever $f:G \longrightarrow F$ is a pure embedding of $G$ into an algebraically compact group $F$, there exists a unique pure embedding $\overline{f}:\aaa(G) \longrightarrow F$ such that $\overline{f} \restriction G = f$.

    \begin{center}
		\begin{tikzcd}[row sep=1.5cm, column sep=1.5cm]
		\aaa(G)\arrow[r, dashed, "\overline{f}"]
		& F
	    \\ G \arrow[u, "\pr"]\arrow[ur, swap, "f"]
		\end{tikzcd}
	\end{center}

        In particular, the extension $G \leq \aaa(G)$ is unique up to isomorphism over $G$.
\end{thm}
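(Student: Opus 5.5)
Since the statement is attributed to Maranda in \cite{fuchs}, I would reconstruct the standard proof by embedding $G$ purely into a concrete algebraically compact group and cutting it down. For $G$ and each $0\neq g\in G$, Zorn's lemma gives a subgroup $M_g\leq G$ maximal among those that avoid $g$ and have pure-compatible quotient. More simply, consider all homomorphisms $\chi:G\to C_\chi$ into co-cyclic groups, and the diagonal map
$$\iota: G\longrightarrow P:=\prod_{\chi} C_\chi .$$
By Lemma \ref{maximalquotient}, $\iota$ is injective. To get purity I would also include, for every $g\notin kG$, a map $\chi$ into some $\Z(p^n)$ with $\chi(g)\notin k\,\Z(p^n)$. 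Such a map comes from a maximal subgroup containing $kG$ and avoiding $g$, applied to the quotient $G/kG$, which is bounded; then $\iota$ is pure. The group $P$ is algebraically compact by Theorem 38.1.

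Next I would take $\aaa(G)$ to be a minimal pure-essential algebraically compact summand of $P$ containing $\iota(G)$. Concretely, I would use Zorn's lemma to choose a subgroup $K\leq P$ maximal with $K\cap\iota(G)=0$ and $(\iota(G)+K)/K \pr P/K$. Then I would show that $P/K$ is algebraically compact, being a summand of $P$, and set $\aaa(G)=P/K$. The key characterization I would prove is that an algebraically compact $A\geq G$ with $G\pr A$ is the hull iff $G$ is \emph{pure-essential} in $A$: every $K\leq A$ with $K\cap G=0$ and $(G+K)/K\pr A/K$ is $0$.

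For the universal property, let $f:G\to F$ be a pure embedding with $F$ algebraically compact. Pure-injectivity of $F$ (equivalent to algebraic compactness, again via Theorem 38.1 and the summand definition) extends $f$ to a homomorphism $\overline f:\aaa(G)\to F$. Its kernel $K$ meets $G$ trivially, and $(G+K)/K$ is pure in $\aaa(G)/K$ because $f$ is pure. Pure-essentiality then forces $K=0$. Purity of the image follows by applying the same argument to $\overline f(\aaa(G))$, which is a pure-essential extension of $f(G)$, or by a complement argument using that $\overline f(\aaa(G))$ is algebraically compact. Uniqueness of $\overline f$ and of the extension over $G$ follow by comparing two extensions and again using pure-essentiality, as in the divisible-hull case of Theorem \ref{divisiblehull}(3).

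The main obstacle is showing that the quotient $P/K$ obtained from the maximal $K$ is actually algebraically compact; equivalently, that $K$ can be taken so that $P=K\oplus A$. I would first try to replace $K$ by a pure-closed one and use that pure-injectives are closed under summands. Failing that, I would use the cleaner route of characterizing algebraic compactness as injectivity relative to pure-exact sequences, and run the injective-hull argument inside that relative homological setting.
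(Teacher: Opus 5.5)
Your proposal leaves the core of the existence proof unproved, does not justify that the extension $\overline f$ has pure image, and sets out to prove a uniqueness clause that is false. The paper only cites Fuchs here, so your argument has to stand on its own.

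\textbf{Existence.} Maximality of $K$ gives exactly one thing: $P/K$ is pure-essential over $G$. (A subgroup $L/K\neq 0$ of $P/K$ meeting the image of $G$ trivially, with $G$ pure in $P/L$, would contradict maximality.) Pure-essentiality alone says nothing about algebraic compactness, since $G$ is pure-essential over itself. So the claim that $P/K$ is algebraically compact, ``being a summand of $P$'', needs an argument using how $K$ sits inside the algebraically compact group $P$, and none is given. That claim is in effect the existence half of Maranda's theorem. Your two fallbacks are programs rather than proofs: replacing $K$ by a pure-closed one would still require $K$ to be algebraically compact, and pure-exact sequences come with no ready-made hull theory.

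\textbf{Purity of the image.} You correctly show $\overline f$ is injective. But pure-essentiality is internal to $\overline f(\aaa(G))$ and says nothing about divisibility in $F$. The complement argument is circular, because algebraic compactness yields a complement only for a subgroup already known to be pure. What would work is density, $\aaa(G)=G+n\aaa(G)$ for all $n$. If $\overline f(x)\in nF$ and $x=g+ny$, then $f(g)\in nF$, so $g\in nG$ by purity of $f(G)$, whence $x\in n\aaa(G)$. This density would have to be proved for your $P/K$.

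\textbf{Uniqueness of $\overline f$.} This cannot come from comparing two extensions. Pure-essentiality controls kernels, while the difference of two extensions is merely a homomorphism vanishing on $G$. In fact this clause of the statement is false whenever $G$ is not algebraically compact (e.g.\ $G=\Z$). Then $\aaa(G)/G\neq 0$, and since $\mathbb{Q}/\Z$ is an injective cogenerator there is a nonzero $h:\aaa(G)\to\mathbb{Q}/\Z$ with $h\restriction G=0$. Put $F=\aaa(G)\oplus\mathbb{Q}/\Z$, which is algebraically compact, and $f(g)=(g,0)$, which is pure. Then $x\mapsto(x,0)$ and $x\mapsto(x,h(x))$ are two different pure embeddings of $\aaa(G)$ into $F$ extending $f$. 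Each is pure because the graph of a homomorphism is always pure: $(x,h(x))=n(y,z)$ forces $x=ny$, hence $(x,h(x))=n(y,h(y))$. The same construction refutes the uniqueness asserted in Theorem \ref{divisiblehull}, so that analogy cannot help.

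\textbf{What is true.} Only the \emph{existence} of a pure extension holds, and that is all you need for uniqueness of $\aaa(G)$ up to isomorphism over $G$. Let $\aaa'(G)$ be another algebraically compact pure-essential extension. The image of a pure extension $\aaa(G)\to\aaa'(G)$ of the identity on $G$ is pure and algebraically compact, hence a summand. Its complement $C$ satisfies $C\cap G=0$ and $G\pr\aaa'(G)/C$, so $C=0$ by pure-essentiality.
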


Theorem \ref{divisiblehull} has a parallel for pure emebddings.

\subsection{Fra\"iss\'e-J\'onsson classes}

We consider a class $\K$, consisting of structures in some first-order language, together with a distinguished class of embeddings, denoted by $\E$. We assume that $\E$ contains all isomorphisms and is closed under compositions. For $A,B \in \K$ we write $A\sqsubseteq_{\E}B$ if $A\subseteq B$ and the inclusion map belongs to $\E$.

\begin{defin}
    For a limit ordinal $\delta$, an $\sqsubseteq_{\E}$-increasing sequence $\{A_\alpha \mid \alpha<\delta\}$ of $\K$-structures is \emph{continuous} if for every limit $\beta<\delta$ we have
    $$A_\beta=\bigcup\limits_{\alpha<\beta}A_\alpha.$$
    
\end{defin}

\begin{defin}
    For an uncountable cardinal $\kappa$, the class $(\K,\E)$ is \emph{${<}\kappa$-closed} if whenever $\{A_\alpha \mid \alpha<\delta\}$ is a continuous sequence of $\K$-structures, and $\delta{<}\kappa$, then

    $$A_\delta:=\bigcup_{\alpha<\delta}A_\alpha \in \K,$$
    and $A_\alpha \sqsubseteq_{\E} A_\delta$ for every $\alpha<\delta$.    
\end{defin}

\begin{defin} For a class $(\K,\E)$ we will say that:
	\begin{itemize}
		\item $(\K,\E)$ has the \emph{Joint Embedding Property} ($\operatorname{JEP}$), if for each $a,b \in \mathcal{K}$ there exists $\E$-embeddings of $a$ into $c$ and of $b$ into $c$, for some $c \in \K$.\\
	\begin{center}
	\begin{tikzcd}
		& a  \ar[dr]
		&
		& \\
		& 
		&c
		&
		&
		&
		\\
		& b  \ar[ur]
		&
		& 
	\end{tikzcd}
	\end{center}
		\item $(\K,\E)$ has the \emph{Amalgamation Property} ($\operatorname{AP}$), if for each pair of embeddings $f:a\longrightarrow b$, $g:a\longrightarrow c$ from $\E$ there exists $d\in \K$, together with a pair of embeddings $f':b\longrightarrow d$, $g':c\longrightarrow d$ in $\E$, such that $f'\circ f = g'\circ g$.\\
		\begin{center}
		\begin{tikzcd}
			& b  \ar[dr,dashed, "f'"]
			&
			& \\
			a \ar[ur, "f"] \ar[dr, swap, "g"] 
			& 
			&d 
			&
			&
			&
			\\
			& c  \ar[ur, dashed, swap, "g'"]
			&
			& 
		\end{tikzcd}
		\end{center}
	\end{itemize}
\end{defin}

\begin{defin}A structure $\mathbb A$ is:
	\begin{itemize}
		\item \emph{$(\K,\E)$-universal}, if for every $B \in \K$, there exists an $\E$-embedding $B \longrightarrow \mathbb A$,
		\item \emph{$(\K,\E)$-injective}, if for any
        $\E$-embedding $f:B\longrightarrow \mathbb A$, any $C \in \K$ such that $B\sqsubseteq_{\E} C$, there exists an extension of $f$ to an $\E$-embedding $\overline{f}:C\longrightarrow \mathbb A$.
        
		\begin{center}
		\begin{tikzcd}
		C\arrow[r, dashed, "\overline{f}"]
		& \mathbb A
	    \\B\arrow[u, "\sqsubseteq_{\E}"]\arrow[ur, swap, "f"]
		\end{tikzcd}
	    \end{center}
		
		\item \emph{$(\K,\E)$-homogeneous}, if any isomorphism between substructures $B_0,B_1 \sqsubseteq_{\E} \mathbb A$ can be extended to an automorphism of $\mathbb A$.
        \begin{center}
		
		\begin{tikzcd}
			B_0 \arrow[d, swap, "\sqsubseteq_{\E}"]  \arrow[r, "\simeq"]
			& B_1 \arrow[d, "\sqsubseteq_{\E}"] 
			\\ \mathbb A \arrow[r, dashed]
			& \mathbb A
		\end{tikzcd}
		
	\end{center}

	\end{itemize}
\end{defin}

\begin{thm}[Fra\"iss\'e Theorem, \cite{fraisse},\cite{jonsson},\cite{kub}]\label{fraissethm}
Let $\lambda$ be an infinite cardinal satisfying $\lambda^{{<}\lambda}=\lambda$. Let $\K$ be a class of structures in a finite first-order language, and let $\E$ be a distinguished class of embeddings, closed under compositions, and containing all isomorphisms. Let $\K_\lambda$ be a sub-class of $\K$, consisting only of models of size less than $\lambda$. Assume that $(\K_\lambda,\E)$ satisfies the following properties:
\begin{enumerate}
    \item Joint Embedding Property,
    \item Amalgamation Property,
    \item Is closed under $\E$-substructures,
    \item Is ${<}\lambda$-closed.
\end{enumerate}

    Then there exists a structure $\mathbb A_\lambda$ that satisfies
    \begin{enumerate}
        \item[a)] $\mathbb A_\lambda=\bigcup\limits_{\alpha{<}\lambda}A_\alpha$,
        where 
            \begin{itemize}
                \item $A_\alpha \in \K_\lambda$, for every $\alpha{<}\lambda$,
                \item $A_\alpha \sqsubseteq_{\E} A_\beta$, whenever $\alpha<\beta{<}\lambda$,
                \item $A_\beta=\bigcup\limits_{\alpha<\beta}A_\alpha$, whenever $\beta{<}\lambda$ is limit.
            \end{itemize}
         \item[b)] $\mathbb A_\lambda$ is $(\K_\lambda,\E)$-universal,
         \item[c)] $\mathbb A_\lambda$ is $(\K_\lambda,\E)$-injective.
    \end{enumerate}
    Moreover, the properties $1.-4.$ properties determine the structure uniquely $\mathbb A_\lambda$ up to isomorphism, and any structure satisfying them is also $(\K_\lambda,\E)$-homogeneous.
\end{thm}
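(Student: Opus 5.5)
The statement is a general Fra\"iss\'e--J\'onsson theorem, so I would give the standard chain construction with bookkeeping for existence, and a back-and-forth argument for uniqueness and homogeneity. Two preliminary cardinal facts come first. First, $\lambda^{<\lambda}=\lambda$ forces $\lambda$ to be regular: if $\operatorname{cf}(\lambda)<\lambda$, K\"onig's theorem gives $\lambda^{\operatorname{cf}\lambda}>\lambda$. Second, because the language is finite, there are at most $\lambda$ isomorphism types in $\K_\lambda$. Likewise, for a fixed $A\in\K_\lambda$ there are at most $\lambda$ pairs $(B,f)$ with $B\sqsubseteq_{\E}A$ and $f\colon B\to C$ an $\E$-embedding into some $C\in\K_\lambda$, counted up to isomorphism over $B$. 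Throughout I read ``closed under $\E$-substructures'' as also giving the usual coherence: if $B\subseteq A'\subseteq A$ with $B\sqsubseteq_{\E}A$ and $A'\sqsubseteq_{\E}A$, then $B\sqsubseteq_{\E}A'$.

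\emph{Existence.} Fix a bijection $\pi\colon\lambda\to\lambda\times\lambda$ with $\pi(\gamma)=(\beta,\xi)$ implying $\beta\le\gamma$. I would build a continuous $\sqsubseteq_{\E}$-chain $\{A_\alpha\mid\alpha<\lambda\}$ in $\K_\lambda$ as follows.
\begin{enumerate}
\item Whenever $A_\beta$ is constructed, list in type $\lambda$ all ``tasks'' over it. These are the pairs $(B,f)$ above, together with the isomorphism types of members of $\K_\lambda$.
\item At a successor stage $\gamma+1$ with $\pi(\gamma)=(\beta,\xi)$, take the $\xi$-th task over $A_\beta$.
\begin{itemize}
\item If it is a pair $(B,f)$, then $B\sqsubseteq_{\E}A_\beta\sqsubseteq_{\E}A_\gamma$. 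Apply AP to the inclusion $B\to A_\gamma$ and to $f$, and let $A_{\gamma+1}$ be the resulting amalgam, so that $f$ factors through $A_{\gamma+1}$ over $B$.
\item If it is a type $C$, apply JEP to $A_\gamma$ and $C$ instead.
\end{itemize}
\item At limit stages $\delta<\lambda$ take unions. By ${<}\lambda$-closedness the union lies in $\K_\lambda$ and every earlier $A_\alpha$ is an $\E$-substructure of it.
\end{enumerate}
Then a) holds by construction, and b) holds because each type was handled by JEP. For c), take $B\in\K_\lambda$ with $B\sqsubseteq_{\E}\mathbb A_\lambda$. Since $|B|<\lambda$ and $\lambda$ is regular, $B\subseteq A_\beta$ for some $\beta$, and the coherence property gives $B\sqsubseteq_{\E}A_\beta$. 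Any $C\sqsupseteq_{\E}B$ is then one of the tasks over $A_\beta$, handled at some later stage, which yields the required extension. If $f\colon B\to\mathbb A_\lambda$ is a general $\E$-embedding, replace $B$ by its image.

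\emph{Uniqueness and homogeneity.} Suppose $\mathbb A=\bigcup A_\alpha$ and $\mathbb A'=\bigcup A'_\alpha$ both satisfy a)--c). I would build by transfinite recursion a continuous increasing chain of isomorphisms $h_\alpha\colon D_\alpha\to D'_\alpha$. Here $D_\alpha\sqsubseteq_{\E}\mathbb A$ and $D'_\alpha\sqsubseteq_{\E}\mathbb A'$ are in $\K_\lambda$, with $A_\alpha\subseteq D_{2\alpha+1}$ and $A'_\alpha\subseteq D'_{2\alpha+2}$.
\begin{itemize}
\item At a forth step, choose an $\E$-substructure $E\in\K_\lambda$ of $\mathbb A$ containing $D_\alpha\cup A_\alpha$, for example some $A_\gamma$ via regularity and coherence. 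Injectivity of $\mathbb A'$ applied to $h_\alpha^{-1}$ extends it to an $\E$-embedding of $E$.
\item Back steps are symmetric.
\item Limits are unions, and ${<}\lambda$-closedness keeps them in $\K_\lambda$.
\end{itemize}
The union of the chain is an isomorphism $\mathbb A\to\mathbb A'$. Homogeneity is the same argument with $\mathbb A'=\mathbb A$, starting from the given isomorphism $B_0\to B_1$ in place of $h_0$.

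\emph{Main obstacle.} The delicate point is the bookkeeping: every task must be scheduled after its base $A_\beta$ exists, and at most $\lambda$ of them may be scheduled. Combined with regularity, this is what keeps each $A_\alpha$ of size $<\lambda$ and ensures that every $B$ of size $<\lambda$ sits inside some $A_\beta$. I would also make explicit that ``closed under $\E$-substructures'' supplies the coherence property; the argument uses it at every step.
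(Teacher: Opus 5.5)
Your proposal is correct and is essentially the paper's proof. It builds a continuous chain with bookkeeping (your bijection $\lambda\to\lambda\times\lambda$ plays the role of the partition $\{\Phi_\gamma\mid\gamma<\lambda\}$), uses AP for the extension tasks and JEP for universality, and then runs a back-and-forth for uniqueness and, with $\mathbb A'=\mathbb A$, for homogeneity.

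You are more explicit than the paper about the regularity of $\lambda$ and the coherence property. Both proofs also tacitly need that unions at limit stages of the back-and-forth remain $\E$-substructures, which is trivial for pure embeddings. The only slip is that in the forth step, injectivity of $\mathbb A'$ is applied to $h_\alpha$, not to $h_\alpha^{-1}$.
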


The Fra\"iss\'e Theorem motivated the important notion of a Fra\"iss\'e class,

\begin{defin}\label{fraisseclass}
    The class $(\K_\lambda,\E)$ satisfying conditions $(1)-(4)$ is called the \emph{Fra\"iss\'e class} of the length $\lambda$. The structure $\mathbb A_\lambda$ is its \emph{Fra\"iss\'e limit}.
\end{defin}

We proceed to the proof.

\begin{proof}
Enumerate as $\{A_\alpha \mid \alpha{<}\lambda \}$ all (up to isomorphism) structures from $\K_\lambda$. We aim to recursively build an $\sqsubseteq_{\E}$-increasing sequence of structures $F_\alpha \in \K_\lambda$, and then set :
    $$\mathbb A_\lambda=\bigcup_{\alpha{<}\lambda}F_\alpha.$$
    For bookkeeping purposes, we fix a partition $\{\Phi_\gamma \mid \gamma{<}\lambda \}$ of the cardinal $\lambda$, consisting of sets of cardinality $\lambda$, such that $\min{\Phi_\gamma}\ge \gamma$, for all $\gamma < \lambda$.
	\vspace{1em}
    
	\textbf{Existence:} Let $F_0=A_0$, and enumerate as $\{g_\gamma \mid \gamma \in \Phi_0 \}$ all $\E$-embeddings $g$, such that $\dom{g}\sqsubseteq_{\E} F_0$ and $\rg{g} \sqsubseteq_{\E} A_\gamma$ for some $\gamma{<}\lambda$.
	
	\begin{itemize}
	\item  In a successor step $F_\alpha$ is defined, and so is the set $\{g_\gamma \mid \gamma \in \Phi_\alpha \}$. Note that if $\beta$ satisfies $\alpha \in \Phi_\beta$, then $\beta \leq \alpha$, so in particular, $g_\alpha$ is already defined, and $\dom{g_\alpha}\subseteq F_\alpha$. Using the Amalgamation Property, we can find $h$, and $F'_{\alpha+1}\in \K_\lambda$, closing the diagram
	\begin{center}
		
		\begin{tikzcd}
			\dom{g_\alpha} \arrow[d, swap, "g_\alpha"]  \arrow[r,"\sqsubseteq_{\E}"]
			& F_\alpha \arrow[d,dashed] 
			\\ \rg{g_\alpha} \arrow[r, "h", dashed]
			& F'_{\alpha+1}
		\end{tikzcd}
		
	\end{center}
	
	Using the Joint Embedding Property we can enlarge $F'_{\alpha+1}$ to $F_{\alpha+1}$, so that $F_{\alpha+1}$ contains an isomorphic copy of $A_\alpha$. 
    
    We use the set $\Phi_{\alpha+1}$ to index all $\E$-embeddings starting from substructures of $F_{\alpha+1}$.
	
	\item In the limit step, we just set 
    $$F_\alpha=\bigcup_{\gamma<\alpha}F_\gamma,$$
    and use the set $\Phi_{\alpha}$ to index all $\E$-embeddings starting from $\sqsubseteq_{\E}$-substructures of $F_{\alpha}$.
	
	\end{itemize}
	
	Universality of the $\mathbb A_\lambda$ is straightforward, so we leave it for the reader, and proceed to show the injectivity.

    Fix $B\sqsubseteq_{\E} C \in \K_\lambda$, and an $\E$-embedding $i:B\longrightarrow \mathbb A_\lambda$. There is some $\delta{<}\lambda$ such that $g_\delta = i^{-1}:i[B]\longrightarrow B\sqsubseteq_{\E} C$. The recursive construction gives us the commutative diagram: 

    \begin{center}
		
		\begin{tikzcd}
			i[B] \arrow[d, swap, "i^{-1}"]  \arrow[r, "\sqsubseteq_{\E}"]
			& F_\delta \arrow[d, "\sqsubseteq_{\E}"] 
			\\ C \arrow[r, "h", dashed]
			& F'_{\delta+1}
		\end{tikzcd}
		
	\end{center}

    It follows that $h$ and $i$ agree on $B$.
    \vspace{1em}
    
    \textbf{Uniqueness:}
    Suppose that we have another structure $\mathbb A'_\lambda$ satisfying the conclusion of the theorem. In particular there exists a decomposition
    $$\mathbb A'_\lambda = \bigcup_{\alpha{<}\lambda}{A'_\alpha},$$
    into an $\E$-increasing sequence of $\K_\lambda$-structures.
    Let $f_0:A_0\longrightarrow A'_0$ be given. We will recursively define an increasing sequence of $\E$-embeddings $f_\alpha$ such that 
    $$f=\bigcup_{\alpha{<}\lambda}f_\alpha$$ will be an isomorphism from $\mathbb A_\lambda$ onto $\mathbb A'_\lambda$.
	\begin{itemize}
		\item Suppose $\alpha$ is an even successor ordinal and $f_\alpha$ is defined. The cardinal $\lambda$ is regular, so there exists $\beta>\alpha$ such that $\dom{f_\alpha}\sqsubseteq_{\E} A_\beta$. Since $\mathbb A'$ is $(\K_\lambda, \E)$-injective, we can find $f_{\alpha+1}$ closing the diagram
		
		\begin{center}
			\begin{tikzcd}[column sep=4 pt]
			\dom{f_\alpha} \arrow[d,swap, "f_\alpha"] 
			& \sqsubseteq_{\E} 
			& A_\beta \arrow[d,dashed, "f_{\alpha+1}"] 
			\\ \rg{f_\alpha} 
			& \sqsubseteq_{\E}
			& \mathbb{A}'_\lambda
		\end{tikzcd}
		\end{center}
		In particular, $A_{\alpha+1} \subseteq \dom(f_{\alpha+1})$.
		\item Suppose $\alpha$ is an odd successor ordinal and $f_\alpha$ is defined. There exists $\beta>\alpha$ such that $\rg{f_\alpha}\sqsubseteq_{\E} B_\beta$. Since $\mathbb A$ is $(\K_\lambda,\E)$-injective, we can find $f_{\alpha+1}$ closing the diagram
		
		\begin{center}
		\begin{tikzcd}[column sep=4 pt]
			\dom{f_\alpha} 
			& \sqsubseteq_{\E} 
			& \mathbb A_\lambda 
			\\ \rg{f_\alpha} \arrow[u, "f_\alpha^{-1}"] 
			& \sqsubseteq_{\E}
			& A'_\beta \arrow[u,dashed,swap, "f_{\alpha+1}^{-1}"] 
		\end{tikzcd}
		\end{center}
		In particular, $A'_{\alpha+1} \subseteq \rg(f_{\alpha+1})$.
		\item If $\alpha$ is limit, we set 
        $$f_\alpha=\bigcup_{\gamma<\alpha}f_\gamma.$$
		
	\end{itemize}
	
	It is evident that $f=\displaystyle{\bigcup_{\alpha{<}\lambda}f_\alpha}$ is an isomorphism.

    \textbf{Homogeneity:} To see that any structure satisfying the conditions $1.-4.$ is $(\K_\lambda,\E)$-homogeneous, note that in the proof of the uniqueness we started the construction from an arbitrary partial isomorphism $f_0$. Therefore we can just take $\mathbb A'_\lambda = \mathbb A_\lambda$ and follow the same construction.
\end{proof}

\begin{defin}
    The structure $\mathbb A$ \emph{pure-universal}/ \emph{pure-injective}/ \emph{pure-homogeneous} of a class $\K$ it it is $(\K_\lambda,\E)$-universal/ $(\K_\lambda,\E)$-injective/ $(\K_\lambda,\E)$-homogeneous for the class $\E$ of all pure embeddings.
\end{defin}

\begin{remark}
    There is a slight conflict of terminology here. In algebra \emph{injective} typically means that that every homomorphism from a small structure can be extended to a homomorphism from a bigger structure. In particular, the notion of pure-injectivity in \cite{fuchs} follows this convention: a group is pure-injective if every partial homomorphism defined on a pure subgroup can be extended to a homomorphism defined on a full group. We are considering only monomorphisms, so our notion of pure-injectivity is different than in \cite{fuchs}.

\end{remark}

\section{Finite and finitely co-generated groups}

\begin{thm} \label{mainthm1}
    The class of all finitely co-generated groups, together with pure embeddings, is a Fra\"iss\'e class. In particular, there exists a unique up to isomorphism group $\F_1$ that is pure-injective for finitely co-generated subgroups, and is an increasing union of countably many finitely co-generated pure subgroups. This group can be represented as
    $$\F_1 = \bigoplus_{j \in \mathbb N}\bigoplus_{p \in \mathbb P}\bigoplus_{n \in \mathbb N\cup\{\infty\}}\Z(p^n).$$
\end{thm}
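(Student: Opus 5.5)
We verify the four hypotheses of Theorem \ref{fraissethm} with $\lambda=\omega$ (so $\lambda^{<\lambda}=\lambda$). Here $\K_\omega$ is the class of finitely co-generated groups, which are countable by Theorem \ref{ctblycogenerated}. The limit is then necessarily countable, since it is a countable union of members of $\K_\omega$. Condition (4), ${<}\omega$-closedness, is vacuous. For condition (3), let $A\pr B$ with $B$ finitely co-generated. By Theorem \ref{co-generated}, $B$ is a finite sum of co-cyclic groups. A subgroup of a finitely co-generated group is finitely co-generated: if $L$ co-generates $B$, then for each $\ell\in L$ we pick a maximal subgroup $M_\ell$ of $B$ avoiding $\ell$, and intersect these with $A$. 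More directly, one can use that the socle of $A$ is finite and essential in $A$, which is the standard characterization in \cite{fuchs}. JEP is immediate: given $A$ and $B$, both are pure (indeed direct summands) in $A\oplus B$, which is again a finite sum of co-cyclic groups.

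For AP, let $f:A\to B$ and $g:A\to C$ be pure embeddings. By Lemma \ref{finitedirectsummand}, $f[A]$ is a direct summand of $B$, so $B\cong A\oplus B'$ with $f$ the inclusion of the first summand, and similarly $C\cong A\oplus C'$. Put $D=A\oplus B'\oplus C'$ with the obvious inclusions $f'$ and $g'$. These are split, hence pure, embeddings. They agree on $A$, and $D$ is finitely co-generated by Theorem \ref{co-generated}. Theorem \ref{fraissethm} therefore yields a unique limit $\F_1$ that is universal, injective and homogeneous for pure embeddings.

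It remains to identify $\F_1$ with $G:=\bigoplus_{j}\bigoplus_{p}\bigoplus_{n\in\N\cup\{\infty\}}\Z(p^n)$. By uniqueness, it suffices to check that $G$ satisfies (a)--(c).

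\textbf{(a)} Enumerate the countably many summands and let $G_k$ be the sum of the first $k$ of them. Each $G_k$ is a direct summand of $G$, hence pure, and is finitely co-generated.

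\textbf{(b)} Universality holds because every finitely co-generated group is a finite sum of co-cyclics, and each co-cyclic type occurs infinitely often in $G$. Any finite sum of co-cyclics therefore embeds as a direct summand.

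\textbf{(c)} Injectivity is the main obstacle. Let $A\pr C$ be finitely co-generated and let $i:A\to G$ be a pure embedding. Since $i[A]$ is finitely co-generated and pure in $G$, it is a direct summand of $G$ by Lemma \ref{finitedirectsummand}. We then need the complement to still contain infinitely many copies of each co-cyclic. I would argue as follows. $i[A]$ is contained in some $G_k$, and is pure in $G_k$ by Lemma \ref{prufer}. Since $i[A]$ is a finitely co-generated pure subgroup, it is a summand: $G_k=i[A]\oplus E$. Then $G=i[A]\oplus E\oplus H_k$, where $H_k$ is the sum of the remaining summands. $H_k$ still contains infinitely many copies of every $\Z(p^n)$. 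Writing $C=A\oplus C'$ by Lemma \ref{finitedirectsummand}, $C'$ is a finite sum of co-cyclics. Embed $C'$ as a direct summand of $H_k$ and extend $i$ by this embedding on $C'$. The resulting map $C\to G$ is a monomorphism onto a direct summand, hence a pure embedding extending $i$. Some care is needed here: the Krull--Schmidt-type uniqueness is not actually required, since we only use that $H_k$ is untouched. This completes the identification $\F_1\cong G$.
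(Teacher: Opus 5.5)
\textbf{Gap in step (c).} Your checks of heredity, JEP, AP and of (a), (b) are fine. Step (c), however, rests on a false claim: a finitely co-generated pure subgroup $i[A]\leq G$ need not lie in any $G_k$ once $A$ has a quasi-cyclic summand. Finite sets have finite support, but copies of $\Z(p^\infty)$ need not.

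Here is a concrete example. Fix $p$, let $e_1,e_2,\ldots$ enumerate the $\Z(p^\infty)$-summands of $G$, and write $\Z(p^\infty)=\Z[1/p]/\Z$. Let $x_m$ have $e_i$-coordinate $p^{-(m-i+1)}$ for $i\leq m$ and $0$ for $i>m$. Then $p x_{m+1}=x_m$, so $Q=\bigcup_m\langle x_m\rangle\cong\Z(p^\infty)$. This $Q$ is divisible, hence pure in $G$. But $x_m$ has non-zero $e_m$-coordinate, so $Q$ lies in no finite subsum. For $A=\Z(p^\infty)$ and $i$ an isomorphism onto $Q$, there is therefore no $k$ to start your argument with. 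The structure-theoretic input you hoped to avoid is genuinely needed for the divisible part.

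\textbf{The missing idea.} It is the paper's: split $A=F\oplus D$ with $F$ finite and $D$ divisible.
\begin{enumerate}
\item Since $\F_0$ is reduced, $i[D]$ lies in $\mathbb D=\bigoplus_{j,p}\Z(p^\infty)$ and is a summand there.
\item By the structure theorem for divisible groups, its complement $D''$ is again isomorphic to $\mathbb D$, since only finitely many of countably many copies are removed at each prime. This is the paper's step $\mathbb D/\phi[D]\simeq\mathbb D$.
\item Then $G=i[D]\oplus G'$ with $G'=\F_0\oplus D''\simeq G$. By the modular law, $i[A]=i[D]\oplus P$ with $P=i[A]\cap G'\simeq F$ finite and pure in $G'$.
\item Your finite-support argument now applies verbatim to $P\leq G'$. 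In fact it gives a clean justification of the paper's terse last step $(\F_0\oplus\mathbb D)/\phi[F]\simeq\F_0\oplus\mathbb D$, since $E\oplus H_k$ is again a countable sum of co-cyclics with every type occurring infinitely often.
\end{enumerate}

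\textbf{A further caveat, shared with the paper.} The same $Q$ shows that Theorem \ref{fraissethm} with $\lambda=\omega$ does not literally cover this class. There, $\K_\omega$ consists of finite structures, and in the back-and-forth a domain $f_{\alpha+1}^{-1}[A'_\beta]$ need not lie in any stage. So ``by uniqueness'' is better obtained directly. A countable union of a pure chain of finitely co-generated groups is a direct sum of co-cyclics, because each stage is a summand of the next. Pure-universality then forces every divisible rank and every Ulm invariant to be $\aleph_0$, which determines the group as $G$.
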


\begin{proof}
    Straight from the definition, the group $\F_1$ is a pure, direct limit of finitely co-generated groups. By Theorem \ref{fraissethm}, it is sufficient to show that $\F_1$ is pure-injective for finitely co-generated groups. 

    Let $A$ be any finitely co-generated group. By Lemma \ref{finitedirectsummand} we know that a pure embedding of a finitely co-generated group is an embedding onto a direct summand. Therefore the relevant diagram is, in fact, of the form:

    \begin{center}
        \vspace{1cm}
		\begin{tikzcd}[row sep=1.5cm]
		A\arrow[d, swap, "\pr"]\arrow[r, "\phi"]
		& \phi[A]\oplus\F'_1 = \F_1
        \\ A\oplus C
		\end{tikzcd}
        \vspace{1cm}
	\end{center}

    We need to find a diagonal arrow by embedding $C$ onto a direct summand of $\F'_1$. For this, it is sufficient to show that $\F_1' \simeq \F_1$.

    We have a decomposition
    $$\F_1 = \F_0 \oplus \mathbb D,$$
    where 
    $$\F_0 = \bigoplus_{j \in \mathbb N}\bigoplus_{p \in \mathbb P}\bigoplus_{n \in \mathbb N}\Z(p^n)$$
    and 
    $$\mathbb D = \bigoplus_{j \in \mathbb N}\bigoplus_{p \in \mathbb P}\Z(p^\infty)$$
    In particular, $\mathbb D$ is divisible. By Theorem \ref{co-generated}, the group $A$ is a direct sum of co-cyclic groups, so it admits a similar decomposition: 
    $$A = F \oplus D,$$
    where $F$ is finite and $D$ is divisible. The image $\phi[D]$ is also divisible, so we have $\phi[D]\leq \mathbb D$. From the structure of divisible groups (Theorem 23.1 \cite{fuchs}), we can easily conclude that $\mathbb D / \phi[D] \simeq \mathbb D$, so it follows that 
    $$(\F_0 \oplus \mathbb D)/\phi[A] \simeq ((\F_0 \oplus \mathbb D)/\phi[D])/\phi[F] \simeq (\F_0 \oplus \mathbb D)/\phi[F].$$
    But given that $F$ is finite, we have 
    $$(\F_0 \oplus \mathbb D)/\phi[F] \simeq \F_0 \oplus \mathbb D = \F_1.$$

\end{proof}

\begin{remark}
The consequence of Theorem \ref{mainthm1} is that $\F_1$ contains a copy of every countable group that is a countable union of finite pure subgroups. However, this class is rather limited. The group $\F_1$ does not have an element of infinite order, so it doesn't have a copy of $\Z$. 
\end{remark}

The very same proof gives also the variant for finite groups. The only difference is that the limit group does not contain quasi-cyclic $\Z(p^\infty)$ factors.

\begin{thm} \label{mainthm2}
	The class of all finite groups, together with pure embeddings is a Fra\"iss\'e class. In particular, there exists a unique up to isomorphism group $\F_0$ that is pure-injective for finite subgroups, and is an increasing union of countably many finite pure subgroups. This group can be represented as
    $$\F_0 := \bigoplus_{j \in \mathbb N}\bigoplus_{p \in \mathbb P}\bigoplus_{n \in \mathbb N}\Z(p^n).$$
\end{thm}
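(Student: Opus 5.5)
I will follow the proof of Theorem \ref{mainthm1} and apply Theorem \ref{fraissethm} with $\lambda=\omega$, where $\omega^{<\omega}=\omega$. The class of finite groups with pure embeddings is closed under pure subgroups, since a pure subgroup of a finite group is finite. It is trivially ${<}\omega$-closed, because only finite continuous chains occur and these have a largest element. JEP holds because $A\oplus B$ contains $A$ and $B$ as direct summands, hence as pure subgroups. AP will be deduced from the injectivity of $\F_0$, so the real work is to show that $\F_0$ is an increasing union of finite pure subgroups and is pure-injective for finite groups. Together with the uniqueness part of Theorem \ref{fraissethm}, this identifies the limit with $\F_0$.

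For the first point, enumerate the countably many summands $\Z(p^n)$ of $\F_0$, indexed by $(j,p,n)\in\N\times\mathbb P\times\N$. The partial sums $F_k$ of the first $k$ summands are finite direct summands of $\F_0$, hence pure subgroups, and $\F_0=\bigcup_k F_k$.

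For injectivity, take a finite $A$, a pure embedding $\phi:A\to\F_0$, and a finite $B$ with $A\pr B$. By Lemma \ref{finitedirectsummand}, $A$ is a direct summand of $B$, say $B=A\oplus C$, and $\phi[A]$ is a direct summand of $\F_0$, say $\F_0=\phi[A]\oplus\F_0'$. It suffices to embed $C$ onto a direct summand of $\F_0'$ and extend $\phi$ by this map on $C$. The resulting map is a monomorphism onto a direct summand of $\F_0$, hence a pure embedding. Since $\F_0$ contains every finite group as a direct summand (each is a finite sum of cyclic $p$-groups, each appearing infinitely often), it is enough to prove $\F_0'\simeq\F_0/\phi[A]\simeq\F_0$.

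This isomorphism is the main step. I would argue directly via Ulm-type invariants rather than quotients. $\F_0'$ is a direct summand of a direct sum of cyclic groups, so by Kaplansky's theorem it is itself a direct sum of cyclic $p$-groups. It is determined up to isomorphism by the multiplicities of each $\Z(p^n)$, and these multiplicities can be read off from the dimensions of $(p^{n-1}G)[p]/(p^nG)[p]$ over $\Z(p)$. Each such invariant is additive over direct sums. In $\F_0$ each invariant is $\aleph_0$, and in $\phi[A]$ it is finite, so in $\F_0'$ each invariant is still $\aleph_0$. This gives $\F_0'\simeq\F_0$. A more elementary alternative is to note that $\phi[A]$ sits inside some $F_k$. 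The finite group $F_k$ decomposes as $\phi[A]\oplus E$ with $E$ finite, so $\F_0=\phi[A]\oplus E\oplus R$, where $R$ is the sum of the remaining summands and $R\simeq\F_0$. Then $\F_0'\simeq\F_0/\phi[A]\simeq E\oplus R\simeq R\simeq\F_0$, since $R$ absorbs any finite direct sum of cyclic groups. I expect this absorption step to be the only delicate point, and the second argument handles it without invoking Kaplansky's theorem.

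Finally, AP follows from injectivity. Given pure embeddings $A\to B$ and $A\to C$ of finite groups, embed $B$ purely into $\F_0$ and extend the composite $A\to\F_0$ along $A\to C$. The union of the images of $B$ and $C$ lies in some finite pure $F_k$, and this $F_k$ is the required amalgam.
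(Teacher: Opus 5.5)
Your proposal is correct and follows the paper's route. As in the paper's proof of Theorem \ref{mainthm1}, adapted to the finite case, you apply Lemma \ref{finitedirectsummand} on both sides and so reduce pure-injectivity to showing $\F_0'\simeq\F_0/\phi[A]\simeq\F_0$, where $\F_0'$ is the complement of the finite pure subgroup $\phi[A]$. The paper only asserts this absorption step and leaves AP implicit. Your partial-sum argument with $F_k=\phi[A]\oplus E$ and your derivation of AP from injectivity fill those two points. One minor correction: the fact that a summand of a direct sum of cyclic groups is again such a sum is Kulikov's theorem (or Pr\"ufer's, since $\F_0'$ is countable), not Kaplansky's. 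Your elementary alternative makes it unnecessary anyway.
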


\section{Infinite groups}

\subsection{Universal groups}

As we have seen, $\F_0$ is not universal for countable subgroups. Here we present an example of a group that is pure-universal for all countable groups. It follows from Theorem 41.9 \cite{fuchs} that it necessarily has to be uncountable. The proof is a minor modification of the proof of Lemma 30.3 \cite{fuchs}.

\begin{thm}[essentially Lemma 30.3, \cite{fuchs}]\label{universal}
    Let $\lambda$ be an infinite cardinal, and let $\G_\lambda$ be the product of co-cyclic groups, where each isomorphism type of a co-cyclic group appears $\lambda$ many times. Then every group of size $\lambda$ purely embeds into $\G_\lambda$.
\end{thm}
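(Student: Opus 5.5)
We embed $A$ (with $|A|=\lambda$) into $\G_\lambda$ by a product of maps into co-cyclic groups, indexed by a family of subgroups of $A$. For each nonzero $a \in A$, Zorn's lemma gives a maximal subgroup $M_a \leq A$ with $a \notin M_a$. By Lemma \ref{maximalquotient}, the quotient $A/M_a$ is co-cyclic, so Theorem \ref{co-cyclic} gives $A/M_a \simeq \Z(p^n)$ for some prime $p$ and $n \in \N\cup\{\infty\}$. The map
$$\Phi = (\pi_a)_a : A \longrightarrow \prod_{a \neq 0} A/M_a$$
is injective, since $\pi_a(a)\neq 0$. This product has at most $\lambda$ factors of each co-cyclic type, so it is a factor of $\G_\lambda$: padding the remaining coordinates with $0$ gives an embedding into $\G_\lambda$.

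This embedding need not be pure, so the family of quotients has to be enlarged. For each $a \in A$ and $k\geq 1$ with $a \notin kA$, we need a coordinate in which the image of $a$ is not divisible by $k$. It suffices to treat prime powers $k=p^m$: if $a = k b$ in the product with $k=\prod p_i^{m_i}$, then $a$ is divisible by every $p_i^{m_i}$, and a Bezout combination gives $a \in kA$ once $a\in p_i^{m_i}A$ for all $i$.

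So fix $a \notin p^mA$. We pass to $\bar A = A/p^mA$, where $\bar a \neq 0$, and choose a maximal subgroup $\bar M \leq \bar A$ avoiding $\bar a$. Since $\bar A$ is bounded by $p^m$, the quotient $\bar A/\bar M$ is co-cyclic of the form $\Z(p^j)$ with $j \leq m$. Let $\psi : A \to \bar A/\bar M$ be the composite. Suppose $\psi(a) = p^m y$ for some $y$ in the cyclic $p$-group $\Z(p^j)$. Then $p^m y = 0$, so $\psi(a)=0$, a contradiction. Hence $\psi(a)$ is not divisible by $p^m$ in that coordinate.

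We therefore add one such coordinate $\psi_{a,p,m}$ for each triple $(a,p,m)$. There are at most $\lambda\cdot\omega=\lambda$ triples, so each co-cyclic type still appears at most $\lambda$ times, and the combined map lands in $\G_\lambda$. It is injective because it already contains $\Phi$.

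To check purity, suppose $\Phi'(a) = k\, x$ with $x \in \G_\lambda$. Projecting onto the coordinates, $\Phi'(a)$ is divisible by each $p_i^{m_i}$ dividing $k$. If $a \notin p_i^{m_i}A$ for some $i$, the coordinate $\psi_{a,p_i,m_i}$ gives a contradiction. Hence $a \in p_i^{m_i}A$ for all $i$, and the Bezout argument gives $a \in kA$, so $a$ is divisible by $k$ in $A$ itself.

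The main obstacle is this purity requirement, specifically producing the witnesses $\psi_{a,p,m}$. The key choice is to quotient by $p^mA$ before taking a maximal subgroup, which forces the co-cyclic quotient to be bounded and so rules out spurious divisibility. The cardinality bookkeeping and the reduction to prime powers are routine.
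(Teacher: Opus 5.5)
Your proof is correct and is essentially the paper's argument. The paper also takes, for each $a\in A$ and $k\geq 0$, a maximal subgroup $M_a^k\supseteq kA$ avoiding $a$ (your passage to $A/p^mA$ is exactly this), with $k=0$ giving injectivity and $k\geq 1$ giving purity because $A/M_a^k$ is annihilated by $k$ while $a+M_a^k\neq 0$; the only difference is that the paper applies this to arbitrary $k$ directly, so your reduction to prime powers via the Bezout argument is harmless but unnecessary.
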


\begin{proof}
    Let $A$ be an arbitrary group of size $\lambda$. For each $a \in A$, and $k \geq 0$, let $M^k_a$ be a maximal subgroup of $A$ disjoint with $\{a \}$, and extending $k\cdot A$ if such group exists, and $M_a^k=A$ otherwise (recall that $0\cdot A = 0$). Let
    $$\pi: A \longrightarrow \prod_{\substack{a \in A \\  k \geq 0}}A/M_a^k$$
    be the product of natural quotient maps. 
    
    Observe that $\pi$ is 1-1: if $a \in A$ is non-zero, then $M^0_a \leq A$ is a proper subgroup, so corresponding coordinate witnesses $\psi(a) \neq 0$.

    Moreover, $\pi$ is a pure embedding: suppose that $k >1$ and $a \in A$ is not divisible by $k$. Then $a \notin k\cdot A$, and so $M^k_a$ is a proper subgroup of $A$. The coset $a+M^k_a$ is not divisible by $k$ in $A/M^k_a$, so in particular the image $\pi(a)$ is not divisible by $k$.

    Finally, by Lemma \ref{maximalquotient} each quotient $A/M_a^k$ is co-cyclic, so the product 
    $$\prod\limits_{\substack{a \in A \\ k \geq 0}}A/M_a^k$$
    is a direct summand in $\G_\lambda$.
\end{proof}
\subsection{Infinite pure-injective groups}
\begin{thm} \label{mainthm3}
	Let $\kappa$ be a strongly inaccessible cardinal, and assume that the group $\F$ has the following property:  whenever $C\leq \F$ has size less than $\kappa$, then $C$ is contained in an algebraically compact direct summand $D \pr \F$ such $\F / D$ contains a pure copy of $\G_\lambda$, for every $\lambda{<}\kappa$. Then $\F$ is pure-injective for the class of all groups of size ${<}\kappa$.
\end{thm}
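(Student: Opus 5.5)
We must show: given $B \pr C$ with $|C|<\kappa$ and a pure embedding $f:B\to\F$, there is a pure embedding $\overline{f}:C\to\F$ extending $f$. The plan is to push $B$ into an algebraically compact summand, use the universal property of the pure-injective hull $\aaa$ to extend to $C$ after passing to a pushout, and use the copy of $\G_\lambda$ in the complement to absorb whatever does not fit into the summand.

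\textbf{Step 1.} Apply the hypothesis to $f[B]$, which has size ${<}\kappa$. This gives an algebraically compact direct summand $D\pr\F$ with $f[B]\leq D$, so $\F=D\oplus E$ with $E\simeq \F/D$, and $E$ contains a pure copy of $\G_\lambda$ for every $\lambda<\kappa$. Since $f[B]\pr\F$ and $f[B]\le D$, we also have $f[B]\pr D$.

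\textbf{Step 2.} Form the pushout $P=(D\oplus C)/\{(f(b),-b): b\in B\}$ of $D\leftarrow B\to C$. The standard fact about pushouts along pure embeddings is that $D\to P$ is pure and $C\to P$ is injective (and pure). Because $D$ is algebraically compact and $D\pr P$, the definition gives $P=D\oplus Q$. Let $\rho:P\to D$ and $\sigma:P\to Q$ be the two projections, and let $\iota:C\to P$ be the canonical map. Since $\rho\iota$ restricts to $f$ on $B$, the candidate is
$$\overline{f}(c)=\rho\iota(c)+g(\sigma\iota(c)),$$
where $g:Q\to E$ is a pure embedding still to be chosen.

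\textbf{Step 3.} Verify that $\overline f$ extends $f$ and is a pure embedding. On $B$, $\iota(b)$ lies in the image of $D$, so $\sigma\iota(b)=0$ and $\overline f(b)=f(b)$. The map $\mathrm{id}_D\oplus g:P\to D\oplus E=\F$ is a pure embedding, being a direct sum of pure embeddings. Then $\overline f=(\mathrm{id}_D\oplus g)\circ\iota$ is a composite of pure embeddings, hence pure by Lemma \ref{prufer}(1).

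\textbf{Step 4 (main obstacle).} We need a pure embedding $g:Q\to E$, and $Q\simeq P/D$ need not have size ${<}\kappa$, since $D$ may be huge. However, $Q$ is generated by $\sigma\iota[C]$, so $|Q|\le|C|<\kappa$. Theorem \ref{universal} then embeds $Q$ purely into $\G_{\lambda}$ for $\lambda=|Q|+\aleph_0<\kappa$, and by hypothesis this copy of $\G_\lambda$ sits purely in $E$. Composing these, using Lemma \ref{prufer}(1), gives $g$.

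The delicate points are the pushout purity fact, which is standard (see \cite{fuchs}) and can be checked directly by solving $kx=a$ in $P$, and the size bound on $Q$, which holds because $Q$ is a homomorphic image of $C$. Inaccessibility of $\kappa$ is not needed for this argument; it is needed only for the Fra\"iss\'e setting of Theorem D.
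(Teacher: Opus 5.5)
Your proof is correct, but it takes a different route from the paper's.

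\textbf{The paper's route (source side).} It first replaces the larger group (your $C$, its $B$) by $\G_\lambda$ via Theorem \ref{universal}. It then uses Maranda's pure-injective hull (Theorem \ref{pureinjectivehull}) in two ways. First, it extends $\phi$ to a pure embedding $\psi:\aaa(A)\to\mathbb D$. Second, implicitly, it realizes $\aaa(A)$ as a pure, hence direct-summand, subgroup of the algebraically compact group $\G_\lambda\supseteq A$. It then sends a complement of $\aaa(A)$ in $\G_\lambda$ into $\mathbb E$ through the pure copy of $\G_\lambda$.

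\textbf{Your route (target side).} You split the pushout $P$ of $D\leftarrow B\to C$ using only the definition of algebraic compactness. Then $\rho\iota$ is a homomorphic extension of $f$ into $D$; this is the classical argument that algebraically compact groups are pure-injective in Fuchs' sense. The term $g\sigma\iota$ is a correction that turns the sum into a pure embedding.

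\textbf{What each buys.} Your version avoids the hull theorem entirely. The two purity checks on the pushout legs are elementary: purity of $D\to P$ uses $B\pr C$, and purity of $\iota$ uses $f[B]\pr D$, which is why your Step 1 observation matters. The part that $E$ must absorb is a homomorphic image of $C$, so Theorem \ref{universal} is only applied to a small group. This makes it clear that inaccessibility plays no role in this theorem. The paper's $2^\lambda<\kappa$ remark is not really used in its argument either. The paper's route is shorter once Maranda's theorem is available. It also yields the slightly stronger conclusion that $\phi$ extends to a pure embedding of all of $\G_\lambda\supseteq B$.

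\textbf{A cosmetic point.} Theorem \ref{universal} is stated for groups of size exactly $\lambda$. When $Q$ is finite, note that its proof works verbatim for groups of size at most $\lambda$, or first embed $Q$ as a direct summand of $Q\oplus\bigoplus_{\lambda}\Z$.
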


\begin{proof}
Consider a diagram:

    \begin{center}
        \vspace{1cm}
		\begin{tikzcd}[row sep=1cm]
		A\arrow[d, swap, "\pr"]\arrow[r, "\phi"]
		& \F
        \\ B
		\end{tikzcd}
        \vspace{1cm}
	\end{center}
where $\phi$ is a pure embedding, and $|B|=\lambda{<}\kappa$. By Theorem \ref{universal}, we can replace $B$ with $\G_\lambda$ (which has size $2^\lambda{<}\kappa$). By the property of $\F$, there is a decomposition $\F = \mathbb D \oplus \mathbb E$ where $\mathbb D$ is algebraically compact, $\phi[A]\pr \mathbb D$, and $\mathbb E$ contains a pure copy of $\G_\lambda$. Let $\psi$ be the extension of $\phi$ given by Theorem \ref{pureinjectivehull}.

There is a decomposition $\G_\lambda = \aaa(A) \oplus C$, and a pure embedding
$\eta:C \longrightarrow \mathbb E$, so we can further extend $\psi$ to

$$\psi + \eta : \aaa(A)\oplus C \longrightarrow \mathbb D \oplus \mathbb E = \F.$$

    \begin{center}
        \vspace{1cm}
		\begin{tikzcd}[row sep=1.5cm, column sep=1.5cm]
		A\arrow[d, swap, "\pr"]\arrow[r, "\phi"]
		& \mathbb D \arrow[dd, "\pr"]
        \\ \aaa(A)\arrow[d, swap, "\pr"]\arrow[ur, swap, "\psi"]
        \\ \G_\lambda = \aaa(A)\oplus C \arrow[r, "\psi + \eta"] & \mathbb D \oplus \mathbb E = \F
        
		\end{tikzcd}
        \vspace{1cm}
	\end{center}
\end{proof}

\begin{thm}\label{mainthm4}

    Let $\kappa$ be a strongly inaccessible cardinal. The class of all groups if size ${<}\kappa$, together with pure embeddings, is a Fra\"iss\'e class. In particular, there exists a unique up to isomorphism group of size $\kappa$ -- denoted $\F_\kappa$ -- that is pure-injective for subgroups of size ${<}\kappa$. 
    
    The group $\F_\kappa$ is isomorphic to the ${<}\kappa$-supported product of length $\kappa$ of all co-cyclic groups, where the isomorphism type of every co-cyclic group appears $\kappa$ many times, or -- equivalently -- the direct union
    
    $$\F_\kappa := \bigcup\limits_{\lambda{<}\kappa} \G_\lambda,$$ where $\G_\lambda$ is defined in Theorem \ref{universal}. 
    
    The group $\F_\kappa$ is pure-homogeneous for subgroups of size ${<}\kappa$, and pure-universal for the groups of size $\kappa$. 
\end{thm}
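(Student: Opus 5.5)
The plan is to exhibit the group $\F_\kappa=\bigcup_{\lambda<\kappa}\G_\lambda$ and verify two things: it has the form required in clause a) of Theorem \ref{fraissethm}, and it satisfies the hypothesis of Theorem \ref{mainthm3}. The Fra\"iss\'e class axioms for $\K_\kappa$ (groups of size ${<}\kappa$ with pure embeddings) are then read off from this single group. Closure under pure substructures is trivial. ${<}\kappa$-closedness holds because $\kappa$ is regular and an increasing union of pure subgroups is pure. JEP and AP are obtained by mapping into $\F_\kappa$: given $A\pr B$, $A\pr C$, first embed $B$ purely into $\F_\kappa$, then extend $A\to \F_\kappa$ to $C$ by pure-injectivity. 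Afterwards, take a pure subgroup of size ${<}\kappa$ containing both images, using Lemma \ref{extending}. Uniqueness, homogeneity and injectivity then follow from Theorem \ref{fraissethm}, which applies since $\kappa^{<\kappa}=\kappa$ for inaccessible $\kappa$. Universality for groups of size $\kappa$ is obtained by writing such a group as a continuous union of pure subgroups of size ${<}\kappa$ (Lemma \ref{extending}) and extending embeddings step by step using injectivity.

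\textbf{Concrete model.} I index the factors by $\kappa\times\mathcal C$, where $\mathcal C$ is the set of co-cyclic types, and take the ${<}\kappa$-supported product $P$. I present it as the union over $\alpha<\kappa$ of $P_\alpha=\prod_{\xi<\alpha}\prod_{\mathcal C}$. For regular $\kappa$ every element has support bounded below $\kappa$, so $P=\bigcup_\alpha P_\alpha$. Each $P_\alpha$ is a direct summand of $P$, hence pure, and $|P_\alpha|\le 2^{|\alpha|+\omega}<\kappa$. The sequence is continuous at limits $\beta$ of cofinality $\ge\omega_1$ only up to elements of full support, so I will instead use the filtration by $P_\alpha$ for $\alpha$ successor, and at limits take the union, checking it is pure in $P$. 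This works because purity is witnessed coordinatewise inside $P$. When $|\alpha|=\lambda$ infinite, $P_\alpha\cong\G_\lambda$, which gives the identification with $\bigcup_\lambda \G_\lambda$.

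\textbf{Hypothesis of Theorem \ref{mainthm3}.} Given $C\le P$ with $|C|<\kappa$, regularity gives $\alpha<\kappa$ with $C\le P_\alpha$. Set $D=P_\alpha$. It is a product of co-cyclic groups, hence algebraically compact, and it is a direct summand with complement $\prod_{\alpha\le\xi<\kappa}$ (${<}\kappa$-supported). That complement is isomorphic to $P$ and contains $P_\mu$-type blocks $\cong\G_\lambda$ as direct summands for every $\lambda<\kappa$. Theorem \ref{mainthm3} then gives pure-injectivity.

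\textbf{Main obstacle.} I expect the main obstacle to be clause a) of Theorem \ref{fraissethm}, namely continuity. At a limit $\beta$ of uncountable cofinality, $\bigcup_{\alpha<\beta}P_\alpha$ is a proper subgroup of $P_\beta$, because it consists of bounded-support elements. My fix is to define $A_\beta=\bigcup_{\alpha<\beta}A_\alpha$ at limits and verify that it is pure in $P$ and of size ${<}\kappa$. Both are immediate: it is a union of pure subgroups, and $\kappa$ is regular. Injectivity of $P$ itself does not depend on the filtration, so the uniqueness argument of Theorem \ref{fraissethm} goes through unchanged.
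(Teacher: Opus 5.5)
Your proposal is correct and follows the paper's route: check that the ${<}\kappa$-supported product satisfies the hypothesis of Theorem~\ref{mainthm3} (taking $D=P_\alpha$), conclude pure-injectivity, and apply Theorem~\ref{fraissethm} together with Lemma~\ref{extending}, while also filling in what the paper leaves implicit (JEP/AP via the injectivity of $\F_\kappa$, and a genuinely continuous filtration). One small correction: $\bigcup_{\alpha<\beta}P_\alpha\subsetneq P_\beta$ at every limit $\beta$, including those of countable cofinality (already at $\beta=\omega$), not only at uncountable cofinality, but since you take unions at all limits this does not affect the argument.
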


\begin{proof}
    The group $\F_\kappa$ clearly satisfies the hypothesis of Theorem \ref{mainthm3}, so it pure-injectve for groups of size ${<}\kappa$. By Lemma \ref{extending}, the regularity of $\kappa$ ensures that any group of size $\kappa$ is a union of a pure-increasing chain of pure subgroups of size ${<}\kappa$. Now Theorem \ref{fraissethm} ensures that $\F_\lambda$ is the Fra\"iss\'e limit of the class of all groups of size ${<}\kappa$, with pure embeddings. 
\end{proof}

A direct consequence of pure-homogeneity of $\F_\kappa$ is

\begin{cor}
    If $\kappa$ is strongly inaccessible, that every pure subgroup of $\F_\kappa$ if size ${<}\kappa$ is a direct summand.
\end{cor}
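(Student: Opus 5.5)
The plan is to derive the statement from the pure-homogeneity of $\F_\kappa$ in Theorem \ref{mainthm4}, by a ``transport'' argument. Fix a pure subgroup $A\pr\F_\kappa$ with $|A|<\kappa$. It suffices to find a second pure subgroup $A'\pr\F_\kappa$, isomorphic to $A$, that is already a direct summand, say $\F_\kappa=A'\oplus K$. Homogeneity then lets me extend a fixed isomorphism $A'\to A$ to an automorphism $\sigma$ of $\F_\kappa$. Since $\sigma$ carries direct sum decompositions to direct sum decompositions, $\F_\kappa=A\oplus\sigma[K]$, and $A$ is a direct summand.

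To produce $A'$, I will show that $A\oplus\F_\kappa\simeq\F_\kappa$. Under such an isomorphism, the image of the first coordinate is a pure subgroup isomorphic to $A$ with a complement, as required. The isomorphism will come from the uniqueness clause of Theorem \ref{fraissethm}, and for this I have to check the characterizing properties of $A\oplus\F_\kappa$.

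\textbf{Chain decomposition.} Write $\F_\kappa=\bigcup_{\lambda<\kappa}\G_\lambda$. The subgroups $A\oplus\G_\lambda$ then form a continuous pure chain of groups of size ${<}\kappa$ whose union is $A\oplus\F_\kappa$. Continuity at limits can be arranged by passing to unions. The sizes stay below $\kappa$ because $\kappa$ is strongly inaccessible.

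\textbf{Pure-injectivity.} I would mimic the proof of Theorem \ref{mainthm3}. Given a pure embedding $\phi:X\to A\oplus\F_\kappa$ and $X\pr Y$ with $|Y|<\kappa$, first replace $Y$ by $\G_\mu$ using Theorem \ref{universal}. The image $\phi[X]$ lies inside $A\oplus\G_\nu$ for some $\nu<\kappa$. Beyond that coordinate, the remaining part of $\F_\kappa$ is a complementary summand that contains pure copies of every $\G_\lambda$, and the new part of $Y$ would be sent there. Then Theorem \ref{fraissethm} yields $A\oplus\F_\kappa\simeq\F_\kappa$.

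\textbf{Main obstacle.} The hard step is exactly this injectivity verification. In Theorem \ref{mainthm3}, the map on $X$ could be extended through the algebraic hull $\aaa(X)$ into an algebraically compact summand $\mathbb D$, using Theorem \ref{pureinjectivehull}. Here the ambient piece $A\oplus\G_\nu$ is not algebraically compact when $A$ is not, so $\aaa(X)$ has no evident place to land. My first attempt would be to use the hypothesis-style decomposition of $\F_\kappa$ to absorb the $A$-component. Concretely, I would split $\phi[X]$ into its projection to a summand $\G_\nu$ and a remainder, and try to show that the remainder can be routed into $A$ while keeping the map pure.

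If this fails, the failure point itself is informative. The argument would then reduce the statement to showing that $A$ is a summand of an algebraically compact summand $\G_\nu\supseteq A$ of $\F_\kappa$. This requires checking carefully whether $A$ has a complement in its algebraic hull. For $A=\Z$ inside its completion in $\G_\nu$, it does not, so I would test that case first before committing to the general argument.
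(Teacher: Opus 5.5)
\textbf{The gap: the split copy of $A$ does not exist, and the statement fails for $A=\Z$.} Your reduction is this: find one pure copy of $A$ that splits, then carry it onto $A$ by an automorphism. That is exactly what the paper means when it calls the corollary a direct consequence of pure-homogeneity, and it has the same missing step. Homogeneity only transports the property of being a summand between pure copies of $A$; it does not create a split copy. The source you propose for one, $A\oplus\F_\kappa\simeq\F_\kappa$, is false in general. The test case you flag at the end refutes the statement itself, not just your method.

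Write $\F_\kappa$ as the ${<}\kappa$-supported product $\prod_{i<\kappa}C_i$. Fix a prime $q$ and split each $x\in\F_\kappa$ as $x=x_q+x_q'$, where $x_q$ keeps the coordinates with $C_i$ a $q$-group. Each $\Z(r^n)$ with $n\leq\infty$ is uniquely $p$-divisible for every prime $p\neq r$, so coordinatewise division does not enlarge supports. Hence the elements of the form $x_q$ form a $p$-divisible subgroup for every $p\neq q$, and those of the form $x_q'$ form a $q$-divisible subgroup. Any homomorphism $\pi:\F_\kappa\to\Z$ therefore sends $x_q$ into $\bigcap_{p\neq q}p\Z=0$ and $x_q'$ into $\bigcap_j q^j\Z=0$. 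So $\operatorname{Hom}(\F_\kappa,\Z)=0$, and no infinite cyclic subgroup of $\F_\kappa$ is a direct summand.

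On the other hand, let $P=\prod_{p\in\mathbb P,\,n\geq 1}\Z(p^n)$, placed on countably many coordinates of $\F_\kappa$ so that it is a summand, and let $e=(1,1,\ldots)\in P$. Then $\Z e\pr P$. Indeed, if $k\nmid m$, choose a prime power $p^s$ dividing $k$ but not $m$; in the coordinate $\Z(p^s)$ every multiple of $k$ is $0$, while $m\cdot 1\neq 0$. By transitivity of purity, $\Z e$ is a pure subgroup of $\F_\kappa$ of size $\aleph_0<\kappa$ that is not a direct summand. In particular $\Z\oplus\F_\kappa\not\simeq\F_\kappa$.

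\textbf{Where your injectivity step breaks.} This is exactly where your injectivity verification fails, and no routing into $A$ can repair it. Consider the summand embedding $\Z e\to\Z\oplus\F_\kappa$, $e\mapsto(1,0)$. It has no extension to any homomorphism $P\to\Z\oplus\F_\kappa$. Composing such an extension with the projection to $\Z$ would give a homomorphism $P\to\Z$ with $e\mapsto 1$, but the same divisibility argument gives $\operatorname{Hom}(P,\Z)=0$. Since $|P|=2^{\aleph_0}<\kappa$, the group $\Z\oplus\F_\kappa$ is not pure-injective for groups of size ${<}\kappa$. More generally, in any group that is pure-injective for groups of size ${<}\kappa$ (such as the one in Theorem \ref{mainthm4}), no pure infinite cyclic subgroup splits. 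To see this, extend $e\mapsto x$ along $\Z e\pr P$ and project onto $\Z x$.

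\textbf{What remains true.} Let $A\pr\F_\kappa$ with $|A|<\kappa$. By regularity of $\kappa$, the union $S$ of the supports of elements of $A$ has size ${<}\kappa$. So $A$ lies in the subproduct $\prod_{i\in S}C_i$, which is an algebraically compact summand of $\F_\kappa$. By the modular law, $A$ splits in $\F_\kappa$ if and only if it splits in that subproduct, that is, if and only if $A$ is algebraically compact. This corrected version needs no homogeneity, and it is precisely what your fallback in the last paragraph was pointing at.
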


It seems plausible that unique pure-homogeneous groups exit for much broader variety of cardinalities. The standard assumption in Fra\"iss\'e-J\'onsson theory is $\kappa=\kappa^{<\kappa}$, so the natural question that comes to mind is the following:

\begin{prob}
    Is it possible to relax the assumption of $\kappa$ being strongly inaccessible in Theorem \ref{mainthm4}?
\end{prob}

\section*{Acknowledgements}
The author was supported by the GA\v{C}R project EXPRO 20-31529X and RVO: 67985840.

\end{document}